\documentclass[journal]{IEEEtran}
\usepackage{cite}
\usepackage{url}
\usepackage{amsmath}
\allowdisplaybreaks
\usepackage{amssymb}
\usepackage{enumitem}
\usepackage{bm}
\usepackage{xcolor}  
\usepackage{hyperref}   % 通常用于生成PDF链接，建议放在cleveref之前
\usepackage{cleveref}
\hypersetup{
	colorlinks=true,     % Set to true to color the link text instead of the box
	linkcolor=blue,      % Color for internal links (sections, figures, etc.)
	citecolor=blue,     % Color for citations (e.g., from bibtex)
	urlcolor=magenta     % Color for external URLs
}
\usepackage{amsmath, amssymb}
\usepackage{algorithm,algorithmic}
\usepackage{mathtools}
\usepackage[caption=false,font=footnotesize,subrefformat=parens,labelformat=parens]{subfig}
\usepackage{refcount}
\usepackage{color}\definecolor{RED}{rgb}{1,0,0}\definecolor{BLUE}{rgb}{0,0,1}
\newcommand{\mathletter}[1]{%
	\expandafter\newcommand\csname b#1\endcsname{\mathbb #1}
	\expandafter\newcommand\csname c#1\endcsname{\mathcal #1}
	\expandafter\newcommand\csname f#1\endcsname{\mathfrak #1}
	\expandafter\newcommand\csname til#1\endcsname{\widetilde #1}
	\expandafter\newcommand\csname ha#1\endcsname{\widehat #1}
	\expandafter\newcommand\csname bf#1\endcsname{\bf #1}
	\expandafter\newcommand\csname s#1\endcsname{\mathsf #1}
}%
\def\mathletters#1{\mathlettersB #1,,}
\def\mathlettersB#1,{\ifx,#1,\else\mathletter #1\expandafter\mathlettersB\fi}
\mathletters{A,B,C,D,E,F,G,H,I,J,K,L,M,N,O,P,Q,R,S,T,U,V,W,X,Y,Z}

\newcommand{\mathletterl}[1]{%
	\expandafter\providecommand\csname v#1\endcsname{\vec{#1}}
}%
\def\mathlettersl#1{\mathlettersC #1,,}
\def\mathlettersC#1,{\ifx,#1,\else\mathletterl #1\expandafter\mathlettersC\fi}
\mathlettersl{a,b,c,d,e,f,g,h,i,j,k,l,m,n,o,p,q,r,s,t,u,v,w,x,y,z}

\renewcommand{\vec}[1]{\mathbf{#1}}

\newtheorem{theo}{Theorem}
\newtheorem{lemma}{Lemma}
\newtheorem{assum}{Assumption}

\newtheorem{remark}{Remark}

\begin{document}

\title{Adaptive Polyak Stepsize with Level-value Adjustment for Distributed Optimization}
\author{Chen Ouyang,  Yongyang Xiong, Jinming Xu, Keyou You, \IEEEmembership{Senior Member, IEEE}, and Yang Shi, \IEEEmembership{Fellow, IEEE}% <-this % stops a space
\thanks{This work was supported in part by the National Natural Science Foundation
	of China (62203254). ({\em Corresponding author: Yongyang Xiong})}%
\thanks{C. Ouyang and Y. Xiong are with the School of Intelligent Systems Engineering, Sun Yat-Sen University, Shenzhen 518107, P.R China. E-mail: \texttt{ouych26@mail2.sysu.edu.cn}; \texttt{xiongyy25@mail.sysu.edu.cn}.}
\thanks{J. Xu is with the State Key Laboratory of Industrial Control Technology and the College of Control Science and Engineering, Zhejiang University, Hangzhou 310027, China. E-mail: \texttt{jimmyxu@zju.edu.cn}.}
\thanks{K. You is with the Department of Automation, Beijing National Research Center for Information Science and Technology, Tsinghua University, Beijing 100084, China. E-mail: \texttt{youky@tsinghua.edu.cn}.}
\thanks{Y. Shi is with the Department of Mechanical Engineering, University of Victoria, Victoria, BC V8W 2Y2, Canada. E-mail: \texttt{yshi@uvic.ca}.}
}

\maketitle

\IEEEpeerreviewmaketitle

\begin{abstract}
%	The selection of stepsizes is a critical challenge in distributed optimization over multi-agent networks. While adaptive methods like the Polyak stepsize are effective in centralized settings for their parameter-free nature, their direct application in distributed environments is non-trivial. 
Stepsize selection remains a critical challenge in the practical implementation of distributed optimization. Existing distributed algorithms  often rely on restrictive prior knowledge of global objective functions, such as Lipschitz constants. While centralized Polyak stepsizes have recently gained attention for their parameter-free adaptability and fast convergence. However, their extension to distributed settings is hindered by the requirement for local function values at the global optimum, which are typically unavailable to individual agents. To bridge this gap, we design a novel distributed adaptive Polyak stepsize algorithm with level-value adjustment (DPS-LA), where each agent only needs to solve a computationally efficient linear feasibility problem, thereby eliminating the dependency on global optimal values. Theoretical analysis proves that DPS-LA guarantees network consensus and achieves a linear speedup convergence rate of $\mathcal{O}(1/\sqrt{nT})$. Numerical results confirm the efficiency of the proposed algorithm.  
%This paper investigates distributed convex optimization problems with constraints over multi-agent networks. We propose a distributed gradient method based on the Polyak stepsize, which dynamically adjusts the stepsize by solving a linearly constrained optimization problem to estimate local function values at the global optimal solution progressively. However, since local gradient information cannot guarantee convergence to the global optimum, we further introduce a decaying variant of the distributed Polyak stepsize strategy. The proposed algorithm ensures exact convergence to the global optimum without relying on any prior problem specific parameters. 

%Theoretical analysis shows that the algorithm not only achieves consensus among all agents but also guarantees convergence to the globally optimal solution of the overall objective function. Moreover, we demonstrate that the function value based on the averaged estimates across the agents attains a convergence rate of \(\mathcal{O}(1/\sqrt{T})\), matching the optimal rate of centralized stochastic Polyak stepsize methods. This result confirms the effectiveness and theoretical competitiveness of the proposed distributed algorithm in terms of convergence performance.
\end{abstract}

\begin{IEEEkeywords}
 distributed optimization, Polyak stepsize, level-value adjustment, linear speedup.
\end{IEEEkeywords}

\section{Introduction}
Distributed optimization has gained significant attention as a core computational framework for multi-agent systems, fueled by its extensive use in areas such as smart grids \cite{Li2022,LHQ2025}, multi-robot networks \cite{Xu2025IROS}. To improve scalability and communication efficiency, a variety of algorithms have been proposed, ranging from distributed gradient algorithms \cite{Wang2025} and primal-dual algorithms \cite{Xu2015}. While these algorithms differ in their design and communication schemes, their practical success remains deeply dependent on a single key element: the selection of an appropriate stepsize policy. 
%In recent years, numerous distributed optimization methods based on local gradient or subgradient information such as distributed gradient algorithms, distributed mirror descent algorithms, and distributed primal-dual algorithms have been developed to enhance scalability and communication efficiency \cite{7,8,9,10,11,12}.

%In the literature on distributed optimization, gradient-based methods have attracted significant attention due to their simplicity and scalability. Early foundational work by Nedić and Ozdaglar \cite{13} established a general framework for multi-agent optimization over networks, where they introduced distributed subgradient schemes and analyzed their convergence guarantees. This line of research was later extended to incorporate constraints on local decision variables, leading to the development of projected consensus algorithms \cite{14}. Other contributions include a accelerated schemes inspired by Nesterov’s method that enable faster convergence in distributed settings \cite{15}.
 
The selection of a suitable stepsize is a core element of optimization algorithms. An excessively large stepsize induces oscillation or even divergence, whereas an overly small one leads to prohibitively slow convergence and poor computational efficiency. Therefore, it is essential to choose a stepsize that allows the algorithm to not only converge but also reach the optimal solution, rather than merely settling within a neighborhood of the optimum. One of the primary challenges in distributed gradient-type algorithms arises from the fact that local gradients evaluated at the global optimum are generally non-zero. To achieve exact convergence to the optimal solution, diminishing stepsize strategies are common, provided they satisfy specific decay conditions \cite{Nedic2009}. However, these strategies suffer from slow convergence rates, limiting their practical applicability, especially in large-scale networked systems. In contrast, constant stepsizes, while facilitating faster initial convergence, typically guarantee convergence only to a neighborhood of the optimal solution, incurring a persistent steady-state error \cite{Nedic2009}. To address this fundamental trade-off and ensure exact convergence with constant stepsizes, recent research introduces various correction mechanisms and algorithmic modifications. Prominent among these are techniques such as gradient tracking \cite{Xu-2025,Huang}. Particularly, the practical deployment of these algorithms is often hindered by their heavy reliance on a priori knowledge, such as the connectivity of the network topology and Lipschitz constants. 

To alleviate this dependency and enhance algorithmic robustness, adaptive strategies receive significant attention, including distributed Barzilai--Borwein algorithms \cite{Gao2022,Yang2024} and automated stepsize schemes \cite{Chen2024,Chen2023a,Ren2021}. Despite these advances, most existing adaptive algorithms still necessitate global information, which is typically unavailable in distributed scenarios. Regarding parameter-free distributed optimization, while \cite{Ma2024} focuses on normalized gradient methods, the pioneering work \cite{Scutari2024} proposes a novel adaptive stepsize mechanism via local backtracking line search. This method operates without any global information and achieves a linear convergence rate. Recent studies \cite{Scutari2026, Scutari2026-2, Xu2025} further extend this framework to distributed composite optimization with convergence guarantee.

Among the various adaptive strategies, the Polyak stepsize is a particularly compelling candidate due to its exceptional performance in centralized optimization. Originally introduced by Polyak \cite{Polyak1987} for subgradient algorithms, this scheme undergoes significant theoretical expansion in recent years. In the finite-sum setting, the Stochastic Polyak Stepsize (SPS) converges to a neighborhood of the optimum \cite{Loizou2021}, while its decreasing variant DecSPS achieves an  \(\mathcal{O}(1/\sqrt{T})\) rate that matches the optimal complexity of SGD with diminishing stepsizes \cite{Orvieto2022}. Driven by its practical efficacy, extensive research focuses on extending the Polyak framework to broader contexts, including second-order variants \cite{Gower2022}, proximal formulations \cite{Ulbrich2023}, and momentum-accelerated algorithms \cite{Gu2025}. Despite their efficacy in centralized optimization, Polyak stepsize require the global optimum, which is generally unavailable in distributed settings.

In this work, we propose a distributed adaptive Polyak stepsize algorithm without knowing the global optimum. A novel level‑value adjustment technique is introduced to dynamically estimate local function values at the global information. Our approach requires each agent to solve only a lightweight linearly constrained problem per iteration. To ensure exact convergence in distributed settings, we further incorporate a decaying mechanism into the stepsize rule. Numerical experiments show that the proposed algorithm attains a faster convergence rate than the Distributed Gradient Descent (DGD) algorithm \cite{Nedic2010}. This performance improvement stems from their fundamentally different stepsize strategies, while allowing each agent to adaptively adjust its own optimal stepsize. The main contributions of this work are
summarized as follows.

(1) Algorithmically, we propose a novel distributed adaptive Polyak stepsize algorithm DPS-LA for distributed optimization. A simple example shows that directly applying the Polyak stepsize within the DGD framework leads to algorithmic divergence. By leveraging a level-value adjustment technique to estimate local function values through lightweight linear feasibility problems, the proposed algorithm eliminates the requirement for prior knowledge of the global optimum. 

(2) Theoretically, we show that DPS-LA achieves a sublinear convergence rate of $\mathcal{O}(1/\sqrt{nT})$, indicating linear speedup with respect to the number of agents $n$. Particularly, this implies that the total communication rounds for a given accuracy decrease proportionally with $n$. To the best of our knowledge, this represents the first theoretical guarantee for a distributed Polyak stepsize algorithm that operates without prior knowledge of the global optimal value. Numerical simulations further validate both the effectiveness of the algorithm.

The remainder of this paper is organized as follows. Section \ref{sec2} formulates the problem. The proposed DPS-LA algorithm is detailed in Section \ref{sec3}. Section \ref{sec4} establishes DPS-LA algorithm convergence results. Section \ref{sec5} provides numerical performance. Finally, \cref{sec6} concludes the paper.

\textit{Notation.} The vector of all ones in $\mathbb{R}^n$ is denoted by $\mathbf{1}_n$. For a matrix $W$, its $(i,j)$-th entry is denoted by $w_{ij}$. A non-negative square matrix $W \in \mathbb{R}^{n \times n}$ is called stochastic if its row sums are all equal to one (i.e., $W\mathbf{1}_n = \mathbf{1}_n$), and doubly stochastic if both its row and column sums are equal to one (i.e., $W\mathbf{1}_n = \mathbf{1}_n$ and $W^\text{T}\mathbf{1}_n = \mathbf{1}_n$). For a nonempty closed convex set $\mathcal{X} \subseteq \mathbb{R}^n$, the projection of a point $y$ onto $\mathcal{X}$ is defined as $P_\mathcal{X}(y) := \operatorname{argmin}_{x \in \mathcal{X}} \|y - x\|$. The distance from a point $x$ to the set $\mathcal{X}$ is $\text{dist}(x, \mathcal{X}) := \|x - P_\mathcal{X}(x)\|$, while the distance between two sets $\mathcal{X}$ and $\mathcal{Y}$ is $\text{dist}(\mathcal{X}, \mathcal{Y}) := \inf_{x \in \mathcal{X}, y \in \mathcal{Y}} \|x - y\|$. $\lfloor x \rfloor$ denote the largest integer not exceeding $x$. The notation  $y = \mathcal{O}(x)$ indicates that there exists a positive constant $M$ such that $y \leq Mx$.

\section{Preliminaries and Problem Formulation}\label{sec2}
In this section, we ﬁrst provide basic of the graph theory, and then formulate the  problem of interest.
\subsection{Graph Theory}
Consider an undirected graph $\mathcal{G}(\mathcal{V}, \mathcal{E})$, where $\mathcal{V} = \{1, \dots, n\}$ represents the set of agents, $\mathcal{E} \subseteq \mathcal{V} \times \mathcal{V}$ denotes the set of undirected links, and $(i, j) \in \mathcal{E}$ implies that agent $j$ can receive information from agent $i$. The neighbor set of agent $i$ is denoted by $\mathcal{N}_i = \{j : (j, i) \in \mathcal{E} \}$. Note that $(i, j) \in \mathcal{E}$ implies $(j, i) \in \mathcal{E}$ for undirected graph. The graph $\mathcal{G}(\mathcal{V}, \mathcal{E})$ is connected if there exists a path for any pair of two distinct agents. The interaction weight matrix associated with $\mathcal{G}(\mathcal{V}, \mathcal{E})$ is denoted by $W = [w_{ij}]_{n \times n}$ with $w_{ij} > 0$ if $(j, i) \in \mathcal{E}$ or $i=j$, and $w_{ij} = 0$, otherwise. Regarding the considered graph, we make the following assumption throughout this paper, which is commonly adopted in distributed optimization \cite{Nedic2018}.

\begin{assum}\label{a1}
	The undirected graph is connected and fixed.
\end{assum}

%	\begin{lemma}\label{lemma1}
%	A matrix $W \in \mathbb{R}^{n \times n}$ is irreducible if its associated undirected graph $\mathcal{G}$ is connected. If a non-negative square matrix $W \in \mathbb{R}^{n \times n}$ is irreducible and row stochastic, then the stochastic vector $\mathbf{a} = [a_1, a_2, \ldots, a_n]^\text{T} \in \mathbb{R}^n$ is a strictly positive vector satisfying
%	\[
%	\lim_{k \to \infty} W^k = \mathbf{1}\mathbf{a}^\text{T},
%	\]
%	where $\mathbf{1} = [1, 1, \ldots, 1]^\text{T} \in \mathbb{R}^n$ .
%\end{lemma}

\subsection{Problem Formulation}
Consider a network with $n$ agents, where each agent \(i\in \mathcal{V}\) privately holds a function \(f_i: \mathcal{X} \to \mathbb{R}\) defined over a common closed convex set \(\mathcal{X} \subseteq \mathbb{R}^m\). All agents collaboratively solve the following optimization problem
\begin{equation}\label{problem}
	\min_{x \in \mathcal{X}} \; 
	f(x) = \frac{1}{n} \sum_{i=1}^{n} f_i(x).
\end{equation}
%where each agent maintains a local variable \(x_i \in \mathbb{R}^m\), with \(x_{i,k}\) denoting its value at iteration \(k\). 
Let \(\mathcal{X}^{\star}\) be a non-empty set of optimal solutions. For \(x^{\star} \in \mathcal{X}^{\star}\), the optimal value is denoted by \(f^{\star} = f(x^{\star})\).  We make the following standard assumptions \cite{Ren2021, Nedic2010}.

\begin{assum}\label{a2}
	The constraint set $\mathcal{X}$ is compact, convex, and bounded.
\end{assum}
\begin{assum}\label{a3}
Each  $f_i$ is convex in its domain.
\end{assum}
Assumption \ref{a2} invokes the uniform boundedness of gradients on bounded sets \cite{Bertsekas2003}. 

		\section{Algorithm Development}\label{sec3}
In this section, we begin by introducing the standard Polyak stepsize and illustrate its limitations in distributed settings through a straightforward example. We then propose a horizontal value adjustment algorithm that dynamically estimates the global optimum required for the Polyak stepsize. Finally, we propose the DPS-LA algorithm.
	
	\subsection{Gradient Descent with the Polyak Stepsize Algorithm}
 Starting from an arbitrary initial point \( x_0 \in \mathbb{R}^m \), the Polyak stepsize update rule is defined as
	\begin{equation}\label{polyak}
		x_{k+1} = x_k - \frac{f(x_k) - f^{\star}}{\|g_k\|^2} g_k,
	\end{equation}
	where $g_k$ denotes the gradient of $f$ at \( x_k \). If $ \|g_k\| = 0$, then \( x_{k+1} = x_k \). Intuitively, the core of this stepsize lies in its use of the function-value gap \( f(x_k) - f^\star \) as an adaptive scaling factor. This design makes the stepsize automatically adjust according to both the current progress (through \( f(x_k) - f^\star \)) and the local geometry (through \( \|g_k\| \)). However, the primary challenge of the Polyak stepsize due to the lack of
	information about the $f^{\star}$. 	Recently, numerous estimation strategies are proposed to estimate the optimal value $f^{\star}$ in \eqref{polyak}, e.g., \cite{Oberman2021,Rabbat2021,Yu2025,Kiwiel1999}.
	\subsection{Polyak Stepsize with the Level-value Adjustment Algorithm}
 Liu et al. \cite{Liu2025} proposed the Polyak Stepsize Violation Detector (PSVD) to address the principal challenge of the unknown optimal value $f^{\star}$ in implementing the Polyak stepsize. They introduced a level-value $\bar{f}_k$ to approximate $f^{\star}$ in \eqref{polyak}
\begin{equation}\label{4}
	x_{k+1} = P_{\mathcal{X}}(x_k - \gamma \frac{f(x_k) - \bar{f}_k}{\|g_k\|^2} g_k), \quad 0 < \gamma < \bar{\gamma} < 2.
\end{equation}
where $\gamma$ and $\bar{\gamma}$ are constants. The core of this approach formulating a linear feasibility problem over a sliding window of $\eta+1$ iterations. Assuming a constant level-value $\tilde{f}$ that underestimates $f^{\star}$, the PSVD problem comprises $\eta+1$ linear inequalities
\begin{align}\label{psvd-ieq}
\left\{
\begin{aligned}
	(g_{k(j)})^\text{T} x &\le (g_{k(j)})^\text{T} x_{k(j)} - \frac{1}{\bar{\gamma}} s_{k(j)} \|g_{k(j)}\|^2 \\
	(g_{k(j)+1})^\text{T} x &\le (g_{k(j)+1})^\text{T} x_{k(j)+1} - \frac{1}{\bar{\gamma}} s_{k(j)+1} \|g_{k(j)+1}\|^2 \\
	&\vdots \\
	(g_{k(j)+\eta})^\text{T} x &\le (g_{k(j)+\eta})^\text{T} x_{k(j)+\eta} - \frac{1}{\bar{\gamma}} s_{k(j)+\eta} \|g_{k(j)+\eta}\|^2.
\end{aligned}
\right.
\end{align}
Geometrically, a feasible point $x$ must reside within the intersection of these half-spaces. Infeasibility of the PSVD problem indicates that the current level estimate is inconsistent with the observed optimization path, prompting an update to a tighter estimate $\bar{f}'$ according to
\begin{equation}\label{LV updata}
	\bar{f}' = \frac{\gamma}{\bar{\gamma}} \bar{f}_{k(j)+\eta} + \left(1 - \frac{\gamma}{\bar{\gamma}}\right) \min_{k \in [k(j), k(j)+1, \ldots, k(j)+\eta]} f(x_k),
\end{equation}
which ensures the new estimate is strictly bounded between the previous level and the optimum
\begin{equation*}
	\bar{f}_{k(j)+\eta} < \bar{f}' < f^{\star}.
\end{equation*}
The update rule \eqref{LV updata} offers a clear interpretation. Particularly, the level-value $\bar{f}'$ is a convex combination of the previous level-value and the minimum function value observed within the time window. As the gradient algorithm drives the iterates' function values $f(x^k)$ towards $f^{\star}$, this convex combination ensures that the level-value $\bar{f}'$  progressively tightened. As a result, $\bar{f}'$ tends to the optimal value $f^{\star}$. The PSVD thus provides a self-correcting mechanism that dynamically adjusts the level-value, accelerating convergence without requiring prior knowledge of $f^{\star}$.

\begin{remark}
	The core mechanism of the PSVD scheme can be interpreted geometrically through the lens of half-spaces. At each iteration $k$, the inequality \eqref{psvd-ieq} is equivalent to requiring that the optimal solution $x^{\star}$ lies within the half-space $H_k$, defined as
$
	H_k := \left\{ x \in \mathbb{R}^m : \frac{(-g_k)^\text{T} (x - x_k)}{\|g_k\|} \geq \frac{s_k \|g_k\|}{\bar{\gamma}} \right\}.
$
	This half-space represents all points sufficiently aligned with the descent direction, given the computed stepsize. The PSVD problem checks the consistency of a sequence of such half-spaces over multiple iterations. If the intersection $\bigcap_{i=k}^{k+T} H_i$ is non-empty, the half-spaces are mutually consistent, implying the stepsize (and thus the level-value) were appropriate. Conversely, infeasibility of this intersection provides a rigorous certificate that at least one half-space excludes $x^{\star}$, indicating a violation of the theoretical stepsize bound. This triggers an adjustment of the level-value, ensuring progressive refinement toward the true optimal value $f^{\star}$. 
\end{remark}

	\subsection{DPS-LA  Algorithm}
A critical observation is that the direct application of the Polyak stepsize to distributed gradient descent algorithm (DGD) results in algorithmic divergence. To illustrate this, we consider an undirected connected network comprising three agents in a triangular topology, with the following convex objective functions
\begin{align*}
	f_1(x) &= 2x_1^2 + 3x_2^2 + x_1x_2 - 4x_1 - 2x_2 \\
	f_2(x) &= x_1^2 + 4x_2^2 - 2x_1x_2 + 3x_1 - x_2 \\
	f_3(x) &= 3x_1^2 + 2x_2^2 + x_1 - 3x_2 + 2,
\end{align*}
where $x=[x_1,x_2]^\text{T}\in\mathbb R^2$ and $\mathcal X=\{x\in\mathbb R^2:\|x\|\le 4\}$.  
Agents communicate via a doubly-stochastic weight matrix $W$ constructed using the Metropolis rule \cite{MR}. Specifically, for the triangle graph, each edge is assigned a weight of $w_{ij}=1/3$. Figure~\ref{fig1} shows that the direct application of the Polyak stepsize fails to achieve consensus. This divergence originates from the structural tension inherent in distributed optimization, where the global objective in \eqref{problem} typically conflicts with the localized minima of individual functions. Within the DGD framework, agents navigate a dual mandate: performing local gradient descent while simultaneously synchronizing states via neighbor-based consensus. Consequently, a naive application of the Polyak stepsize \eqref{polyak} introduces a critical misalignment, as local function-value gaps fail to accurately reflect the global progress required for consensus. This mismatch ultimately triggers inconsistent update trajectories and leads to oscillatory behavior across the network.
\begin{figure}[t]
	\centering
	
	% 第一张图
	\subfloat{%
		\includegraphics[width=0.7\linewidth]{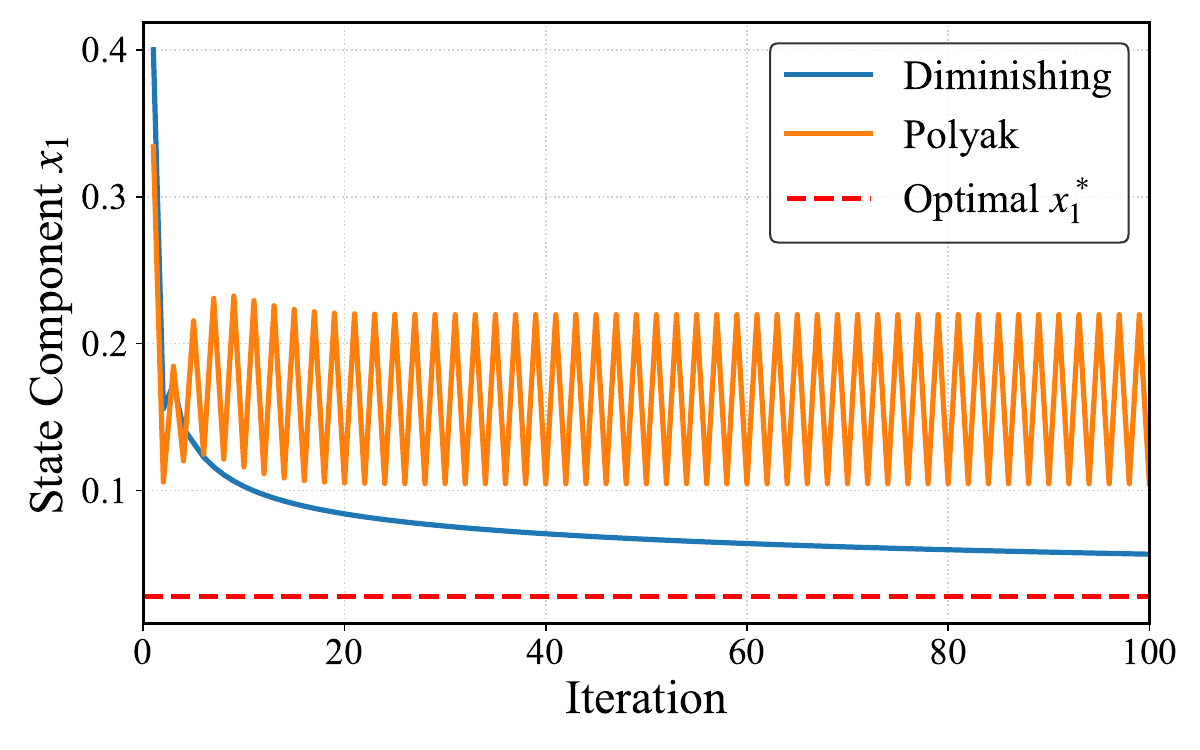}} \\
	% 两图之间的垂直间距
	\vspace{-1em} 
	% 第二张图
	\subfloat{%
		\includegraphics[width=0.7\linewidth]{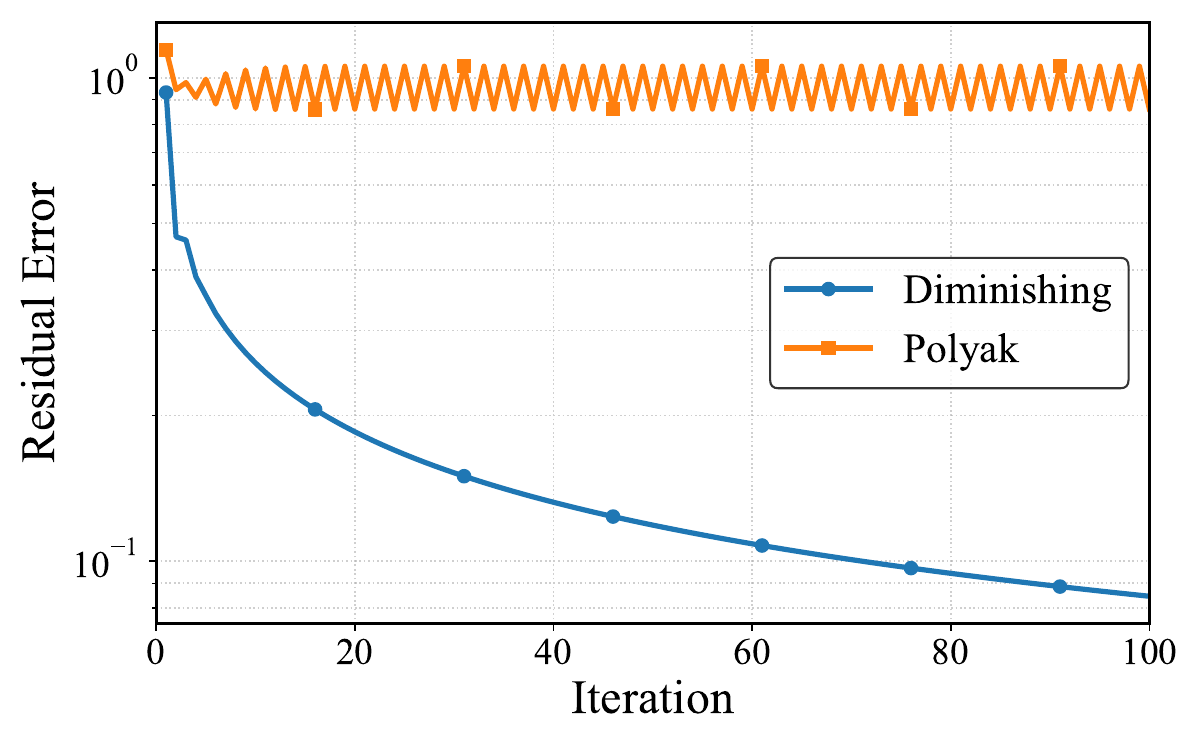}}
	\caption{In contrast to the diminishing stepsize, the direct integration of the Polyak stepsize into the distributed gradient scheme triggers algorithmic instability and eventual divergence.}
	\label{fig1}
\end{figure}

To overcome the aforementioned phenomena and obtain a suitable distributed Polyak stepsize algorithm, we first compute the individual stepsize $\beta_{i,k}$ on each agent $i\in \mathcal{V}$
\begin{align}\label{1}
	\beta_{i,k} =\gamma\frac{f_i(z_{i,k}) - {f}^{\star}_i}{\|\nabla f_i^k\|^2},
\end{align}
where $x_{j,k}$ represents the state of agent $j$ at iteration $k$, serving as a local estimate of the optimal solution. The variable $z_{i,k} = \sum_{j=1}^n w_{ij} x_{j,k}$ denotes the aggregated state obtained by fusing information from neighbors. Additionally, $f_i^{\star} := f_i(x^{\star})$ is the local function value at the global optimum, and $\nabla f_i^k$ is the gradient of $f_i$ evaluated at $z_{i,k}$. It is worth noting that due to the iterative process being
\begin{align*}
	 	x_{i,k+1} = P_{\mathcal{X}} \left( z_{i,k} - \beta_{i,k} \nabla f_i(z_{i,k}) \right),
\end{align*}
therefore, \(z_{i,k}\) will gradually converge towards the globally optimal point. Correspondingly, the value of \(f_i^{\star}\) in \eqref{1} should represent the local function's value at the globally optimal point. However, as with the standard Polyak algorithm, this value is often unknowable. Hence, we further develop a distributed level-value adjustment algorithm, wherein each agent maintains a linear inequality problem within a time window for $t = 0,1,\ldots,\eta$,
	\begin{align}\label{12}
		\left( \nabla f_i^{k_i(j)+t} \right)^{\text{T}} (x- z_{i,k_i(j)+t}) 
\leq - \frac{1}{\bar{\gamma}} \beta_{i,k_i(j)+t} \| \nabla f_i^{k_i(j)+t} \|^2,
\end{align}
When local problem \eqref{12} becomes infeasible, it indicates that agent $i$'s current level-value estimate $\bar{f}_i^{k_i(j)+\eta}$ is inconsistent with its optimization trajectory. The level-value is then updated to a tighter estimate
\begin{equation}\label{level}
	\bar{f}_i' = \frac{\gamma}{\bar{\gamma}} \bar{f}_i^{k_i(j)+\eta} + \left(1 - \frac{\gamma}{\bar{\gamma}}\right) \min_{k \in [k_i(j), \ldots, k_i(j)+\eta]} f_i(z_{i,k}).
\end{equation}
This update rule computes $\bar{f}_i'$ as a convex combination of the previous level-value and the minimum function value observed within the local time window, ensuring monotonic tightening towards $f_i^{*}$.

\begin{algorithm}[t]
	\caption{DPS-LA}
	\label{algorithm1}
	\begin{algorithmic}[1]
		\REQUIRE Given initial variables $x_{i,0} \in \mathcal{X}$; select parameters $\gamma$, $\bar{\gamma}$ such that $0 < \gamma < \bar{\gamma} < 2$; non-decreasing positive sequence $\{c_k\}_{k=0}^\infty$; $\alpha_{i,-1} = \alpha_0 > 0$ and level-value $\bar{f}_i^0 < f_i^{\star}$, $\forall i \in \mathcal{V}$; the weight matrix $W \in \mathbb{R}^{n \times n}$; and the maximum iteration number $K$.
		
		\FOR{$k = 0, 1, \ldots, K-1$}
		\STATE \textbf{Consensus step:} 
		$z_{i,k} = \sum_{j=1}^n w_{ij} x_{j,k}$.
		\STATE \textbf{Stepsize computation:}
		\begin{align*}
			\beta_{i,k} &= \gamma \frac{f_i(z_{i,k}) - \bar{f}^k_i}{\|\nabla f_i(z_{i,k})\|^2}, \\
			\alpha_{i,k} &= \frac{1}{c_k} \text{min}\left\{ \text{max}\left\{\beta_{i,k}, \frac{c_0\alpha_0}{2}\right\}, c_{k-1} \alpha_{i,k-1} \right\}.
		\end{align*}
		\STATE \textbf{State update:}
		$x_{i,k+1} = P_{\mathcal{X}}\left( z_{i,k} - \alpha_{i,k} \nabla f_i(z_{i,k}) \right)$.
		\STATE \textbf{Feasibility check and level-value update:}
		\IF{problem \eqref{12} with added constraint $(\nabla f_i(z_{i,k}))^{\text{T}} x \leq (\nabla f_i(z_{i,k}))^{\text{T}} z_{i,k} - \frac{1}{\bar{\gamma}} \beta_{i,k} \|\nabla f_i(z_{i,k})\|^2$ is infeasible}
		\STATE Update $\bar{f}^{k+1}_i$ using rule \eqref{level};
		\STATE Remove all previously added inequalities.
		\ELSE
		\STATE $\bar{f}^{k+1}_i = \bar{f}^k_i$.
		\ENDIF
		\ENDFOR
	\end{algorithmic}
\end{algorithm}

To guarantee exact convergence, we apply a decaying mechanism
\begin{equation*}
	\alpha_{i,k} = \frac{1}{c_k} \min \left\{  \max\{\beta_{i,k}, \frac{c_0\alpha_0}{2}\}, c_{k-1} \alpha_{i,k-1} \right\},
\end{equation*}
where $c_k$ is a non-decreasing positive sequence, $\alpha_0$ is a initial stepsize. The $\max$ operator ensures a controllable lower bound, while the $\min$ operator guarantees the overall decay of the stepsize. This scheme requires only the solution of simple linearly feasibility problems at each agent, eliminating the need for manual stepsize tuning. Accordingly, we propose a fully automated stepsize scheme for distributed optimization and learning, as outlined in Algorithm \ref{algorithm1}.

\begin{remark}
A pivotal design aspect in the stepsize computation (Step 2) is the aggregated variable $z_{i,k} = \sum_{j=1}^n w_{ij}x_{j,k}$. Unlike an agent's local state $x_{i,k}$ or a global state, $z_{i,k}$ constitutes a locally weighted average of neighboring states. This aggregation facilitates information diffusion, providing a local consensus reference that guides the agent's update. By querying $f_i$ and $\nabla f_i$ at $z_{i,k}$, agents collectively advance toward a common optimum. This design dictates the objective of the level-value adjustment. Since $z_{i,k}$ converges to the global optimum $x^{\star}$, the target for the Polyak stepsize is not the local minimum but $f_i(x^{\star})$. Our core innovation for approximating this target is a distributed level-value adjustment (Steps 5--6), which operates as an online cutting-plane algorithm. Each iteration, agent $i$ tests its level-value $\bar{f}_i^k$ by solving a local lightweight problem augmented with a new gradient-based inequality. Infeasibility indicates that $\bar{f}_i^k$ is an underestimate, prompting an update to a more conservative value. This mechanism creates an adaptive feedback loop, enabling agents to collaboratively learn $f_i(x^{\star})$.
\end{remark}

%\begin{figure}
%	\centering
%	\includegraphics[width=1\linewidth]{../1/test}
%
%\end{figure}
	\section{Convergence Analysis}\label{sec4}
In this section, we ﬁrst establish the permissible bounds for the stepsize. Subsequently, we prove that the level-value associated with each agent asymptotically approaches the global optimum, and that the agents achieve optimal consensus. Finally, the sublinear convergence rate of DPS-LA, which achieves a linear speedup, is rigorously established.

\begin{lemma}\label{bound}
	Consider a non-decreasing sequence of positive real numbers $\{c_k\}$. Then, for all $i \in \mathcal{V}$ and $k \geq 1$, the stepsize $\alpha_{i,k}$ is bounded and satisfies the monotonicity property $\alpha_{i,k} \leq \alpha_{i,k-1}$.
\end{lemma}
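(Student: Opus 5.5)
The argument uses only the defining recursion
\[
\alpha_{i,k} = \frac{1}{c_k}\min\Big\{\max\Big\{\beta_{i,k},\tfrac{c_0\alpha_0}{2}\Big\},\, c_{k-1}\alpha_{i,k-1}\Big\}.
\]
The key device is the rescaled quantity $a_{i,k} := c_k\alpha_{i,k}$. Multiplying the recursion by $c_k>0$ gives
\[
a_{i,k}=\min\big\{\max\{\beta_{i,k}, c_0\alpha_0/2\},\, a_{i,k-1}\big\}.
\]

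\textbf{Upper bound and decay of $a_{i,k}$.} The recursion immediately gives $a_{i,k}\le a_{i,k-1}$, so $\{a_{i,k}\}$ is non-increasing in $k$. Iterating down to the initialization yields $a_{i,k}\le a_{i,0}\le c_{-1}\alpha_{i,-1}$. I will read the convention $\alpha_{i,-1}=\alpha_0$ together with $c_{-1}:=c_0$, so that $a_{i,-1}=c_0\alpha_0$. This gives $a_{i,k}\le c_0\alpha_0$ for all $k$.

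\textbf{Lower bound on $a_{i,k}$.} I will show $a_{i,k}\ge c_0\alpha_0/2$ for all $k\ge 0$ by induction. The base case is $a_{i,-1}=c_0\alpha_0\ge c_0\alpha_0/2$. For the inductive step, both arguments of the $\min$ are at least $c_0\alpha_0/2$: the first is a $\max$ with $c_0\alpha_0/2$, and the second is $a_{i,k-1}$, which satisfies the bound by hypothesis. This step does not need $\beta_{i,k}\ge 0$, so it holds regardless of whether the level value is currently below $f_i(z_{i,k})$. Together with the previous step,
\[
\frac{c_0\alpha_0}{2c_k}\le \alpha_{i,k}\le \frac{c_0\alpha_0}{c_k}\le \alpha_0 ,
\]
where the last inequality uses $c_k\ge c_0$. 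This is the boundedness claim, and the lower bound shows $\alpha_{i,k}$ is strictly positive.

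\textbf{Monotonicity.} Divide $a_{i,k}\le a_{i,k-1}$ by $c_k$ and use $c_k\ge c_{k-1}>0$:
\[
\alpha_{i,k}=\frac{a_{i,k}}{c_k}\le \frac{a_{i,k-1}}{c_k}\le \frac{a_{i,k-1}}{c_{k-1}}=\alpha_{i,k-1}.
\]
This holds for $k\ge1$, which is the claimed monotonicity.

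The only delicate point is the initialization convention. The statement for $k\ge1$ avoids relying on the undefined $c_{-1}$ except to bound $a_{i,0}$. If one prefers not to define $c_{-1}$, I will bound $a_{i,0}\le \max\{\beta_{i,0},c_0\alpha_0/2\}$ instead. This quantity is finite because $\mathcal{X}$ is compact (Assumption~\ref{a2}) and $f_i$ is continuous and convex (Assumption~\ref{a3}), so $f_i(z_{i,0})-\bar f_i^0$ is finite. Here I assume $\nabla f_i(z_{i,0})\neq 0$, with the convention that $\beta_{i,k}$ is set to $0$ or skipped when the gradient vanishes. The upper bound then reads $\alpha_{i,k}\le a_{i,0}/c_k$, and the rest of the argument is unchanged.
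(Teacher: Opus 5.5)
Your proof is correct and is essentially the paper's argument: both prove $c_0\alpha_0/2 \le c_k\alpha_{i,k} \le c_0\alpha_0$ by induction on the rescaled quantity (your $a_{i,k}$ is the paper's $c_0 t$). Both use the same implicit convention $c_{-1}\alpha_{i,-1}=c_0\alpha_0$ and get monotonicity from $\alpha_{i,k}\le \frac{c_{k-1}}{c_k}\alpha_{i,k-1}$ with $c_{k-1}\le c_k$; your remark that both arguments of the $\min$ are already at least $c_0\alpha_0/2$ just shortens the paper's case split on the size of $\beta_{i,k+1}$.
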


\begin{IEEEproof}
See Appendix \ref{L2proof}.
\end{IEEEproof}

\begin{lemma}\label{psvd}
	Given a fixed constant $\eta > 0$, the sequence $\{z_{i,k}\}_{k = k_i(j)}^{k_i(j)+\eta}$ is generated via gradients $\{\nabla f_i(z_{i,k})\}$ and parameters $\{\beta_{i,k}\}$ under constant level-values satisfying $\bar{f}_i^k < f_i^{\star}$. The PSVD problem for a decision vector \( x \in \mathbb{R}^m \) is defined by the following system of inequalities for $t = 0, 1, \ldots, \eta$:
	\begin{align*}
		&\left( \nabla f_i^{k_i(j)+t} \right)^{\text{T}} (x- z_{i,k_i(j)+t}) 
		&\leq - \frac{1}{\bar{\gamma}} \beta_{i,k_i(j)+t} \| \nabla f_i^{k_i(j)+t} \|^2.
	\end{align*}
	Infeasibility of this system implies the existence of an index \( k \in \{k_i(j), \ldots, k_i(j) + \eta\} \) such that
	\begin{equation*}
		\beta_{i,k} > \bar{\gamma} \frac{f \left( z_{i,k}  \right) - f_i^{\star}}{\| \nabla f_i^k\|^2}.
	\end{equation*}
	The corresponding updated level-value \( \bar{f}_i' \), defined as
	\begin{equation*}
		\bar{f}_i' = \frac{{\gamma}}{\bar{\gamma}} \bar{f}_i^{k_i(j)+\eta} + \left( 1 - \frac{{\gamma}}{\bar{\gamma}} \right) \underset{{k \in \{k_i(j), \ldots, k_i(j) + \eta\} }}{\text{min}} f_i \left( z_{i,k} \right), 
	\end{equation*}
	provides a strictly tighter lower bound satisfying $\bar{f}_i^{k_i(j)+\eta} < \bar{f}_i' < f_i^{\star}$.
\end{lemma}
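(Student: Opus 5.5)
We argue by contraposition for the first claim, using $x^{\star}$ as the candidate feasible point. Suppose that for every $k\in\{k_i(j),\ldots,k_i(j)+\eta\}$ we have $\beta_{i,k}\le \bar{\gamma}\,(f_i(z_{i,k})-f_i^{\star})/\|\nabla f_i^k\|^2$; here the $f$ in the statement is read as the local $f_i$. Convexity of $f_i$ (Assumption \ref{a3}) gives
\begin{equation*}
\left(\nabla f_i^{k}\right)^{\text{T}}(x^{\star}-z_{i,k})\le f_i^{\star}-f_i(z_{i,k}).
\end{equation*}
By the assumed bound, $f_i^{\star}-f_i(z_{i,k})\le -\frac{1}{\bar{\gamma}}\beta_{i,k}\|\nabla f_i^k\|^2$. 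Hence $x=x^{\star}$ satisfies every inequality of the PSVD system, and the system is feasible. So infeasibility forces some index $k$ with $\beta_{i,k}>\bar{\gamma}(f_i(z_{i,k})-f_i^{\star})/\|\nabla f_i^k\|^2$. This mirrors the half-space interpretation in the Remark on PSVD. If $\nabla f_i^k=0$ the step is skipped, and I would exclude such indices from the window.

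For the level-value bound, we use that the level is constant on the window, $\bar f_i^{k}=\bar f_i^{k_i(j)+\eta}=:\bar f$. Substitute $\beta_{i,k}=\gamma(f_i(z_{i,k})-\bar f)/\|\nabla f_i^k\|^2$ into the violated inequality at the index $k$ found above. The gradient norms cancel, leaving
\begin{equation*}
\gamma\bigl(f_i(z_{i,k})-\bar f\bigr)>\bar{\gamma}\bigl(f_i(z_{i,k})-f_i^{\star}\bigr),
\end{equation*}
equivalently
\begin{equation*}
f_i^{\star}>\frac{\gamma}{\bar\gamma}\bar f+\Bigl(1-\frac{\gamma}{\bar\gamma}\Bigr)f_i(z_{i,k}).
\end{equation*}
Since $1-\gamma/\bar\gamma>0$, the right-hand side is monotone increasing in $f_i(z_{i,k})$. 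Replacing $f_i(z_{i,k})$ by the window minimum only decreases it, which gives $\bar f_i'<f_i^{\star}$.

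For the lower inequality $\bar f<\bar f_i'$, $\bar f_i'$ is a convex combination with positive weight $1-\gamma/\bar\gamma$ on $\min_k f_i(z_{i,k})$. So it suffices that $\min_k f_i(z_{i,k})>\bar f$. This is the delicate step, because $z_{i,k}$ is not the global optimizer and $f_i(z_{i,k})$ may fall below $f_i^{\star}$ locally. I would try to obtain it from the fact that the PSVD is only ever run with positive $\beta_{i,k}$. That is, I would argue (or impose as the natural standing condition of the algorithm) that $f_i(z_{i,k})>\bar f$ for all $k$ in the window, since otherwise $\beta_{i,k}\le 0$ and the corresponding inequality is trivially satisfied by points behind $z_{i,k}$. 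Granting $f_i(z_{i,k})>\bar f$ throughout the window, the strict inequality $\bar f<\bar f_i'$ follows immediately. I expect justifying this positivity to be the main obstacle; I would handle it by the case split $\beta_{i,k}>0$ versus $\beta_{i,k}\le 0$ within the algorithm's logic.
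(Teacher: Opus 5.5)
Your argument for the first claim and for the upper bound $\bar f_i'<f_i^{\star}$ is the paper's argument, phrased contrapositively: test the system at $x^{\star}$, apply the gradient inequality, substitute $\beta_{i,k}$, cancel $\|\nabla f_i^k\|^2$ and rearrange. You are actually more careful than the paper here. Its proof only establishes $f_i^{\star}>\frac{\gamma}{\bar\gamma}\bar f_i^k+(1-\frac{\gamma}{\bar\gamma})f_i(z_{i,k})$ at the violating index and never passes to the window minimum that defines $\bar f_i'$. Your monotonicity remark supplies that step.

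The gap is the strict lower bound $\bar f<\bar f_i'$. You correctly reduce it to $\min_k f_i(z_{i,k})>\bar f$, but you then propose to impose this as a standing condition or to derive it from positivity of $\beta_{i,k}$. Neither works.
\begin{itemize}
\item Algorithm \ref{algorithm1} does not guarantee $\beta_{i,k}>0$. Only $\bar f_i^k<f_i^{\star}$ is known, and $f_i(z_{i,k})$ can lie below $f_i^{\star}$, because $x^{\star}$ need not minimize $f_i$.
\item Constraints with $\beta_{i,k}\le 0$ are added to the system like any other, and they are not vacuous.
\item An extra hypothesis would change the lemma.
\end{itemize}
The paper's own proof simply asserts $\bar f_i^k<f_i(z_{i,k})$ ``by construction'', so you were right to flag this step.

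The missing idea is that the inequality follows from infeasibility itself, by the same device you used with $x^{\star}$: test the system at a window point. Let $k_0$ minimize $f_i(z_{i,k})$ over the window, and set $\mu=f_i(z_{i,k_0})$. Suppose $\mu\le\bar f$. For every $k$ in the window, convexity gives
\begin{equation*}
(\nabla f_i^k)^{\text{T}}(z_{i,k_0}-z_{i,k})\le \mu-f_i(z_{i,k})\le -\frac{\gamma}{\bar\gamma}\bigl(f_i(z_{i,k})-\bar f\bigr)=-\frac{1}{\bar\gamma}\beta_{i,k}\|\nabla f_i^k\|^2 .
\end{equation*}
The middle inequality is equivalent to $\mu\le\frac{\gamma}{\bar\gamma}\bar f+(1-\frac{\gamma}{\bar\gamma})f_i(z_{i,k})$. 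This holds because $\mu$ is at most both $\bar f$ and $f_i(z_{i,k})$, and the weights form a convex combination since $0<\gamma<\bar\gamma$.

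Hence $z_{i,k_0}$ is feasible, contradicting infeasibility. Therefore $\mu>\bar f$, and so $\bar f_i'>\bar f$ strictly, with no extra assumption. The only other fact used is $z_{i,k}\in\mathcal{X}$, which holds because each $z_{i,k}$ is a convex combination of points of $\mathcal{X}$.
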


\begin{IEEEproof}
	See Appendix \ref{L3proof}.
\end{IEEEproof}

	It is crucial to emphasize that while in the foundational work by Liu et al. \cite{Liu2025}, the level update mechanism is designed to converge to the agent's local optimum $f_i^{\star}$ in a single-agent context, its behavior is fundamentally altered in our distributed setting. The inclusion of a consensus protocol compels each agent's state $z_{i,k}$ to converge to the single global optimum $x^{\star}$, rather than exploring its own local function's landscape. The level update is driven by the history of evaluated function values. Since consensus forces $z_{i,k}$ converge to $ x^{\star}$, this history becomes dominated by values near $f_i(x^{\star})$. This directly implies that the level estimate $\bar{f}_i^k$ converges not to the local minimum, but to the agent's functional value at the global solution, $f_i(x^{\star})$. Thus, in the distributed framework, $\bar{f}_i^k$ effectively serves as an dynamic estimate of the agent's contribution to the global optimal value, a behavior confirmed by our numerical experiments.

Lemma~\ref{bound} provides a boundedness property of the step sizes, and Lemma~\ref{psvd} demonstrates that the estimated values progressively approach \(f_i^{\star}\). The subsequent result indicates that agents achieve consensus, where the consensus error is bounded by the gap between the level-value and the optimal value.

\begin{lemma}\label{lemma0}
	For a graph sequence satisfying Assumptions 1, the $c_k=\sqrt{k+1}$, and the optimization problem \eqref{problem} satisfying Assumptions 2 and 3, the agent's estimates $x_{i,k}$, $\forall i \in \mathcal{V}$, in the distributed gradient algorithm reach a consensus, i.e., $\lim_{k \to \infty} \| x_{i,k} - x_{j,k} \| = 0$, $\forall i, j \in \mathcal{V}$.
\end{lemma}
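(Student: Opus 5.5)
We follow the standard perturbed-consensus route of Nedić et al.: treat the projected gradient step as a perturbation of the averaging iteration and show that this perturbation vanishes. For each agent define the error
\[
e_{i,k} := x_{i,k+1} - z_{i,k} = P_{\mathcal{X}}\bigl(z_{i,k} - \alpha_{i,k}\nabla f_i(z_{i,k})\bigr) - z_{i,k},
\]
so that $x_{i,k+1} = \sum_{j} w_{ij} x_{j,k} + e_{i,k}$. By Assumption~\ref{a1}, $\mathcal{X}$ is convex and $z_{i,k}\in\mathcal{X}$ as a convex combination of points of $\mathcal{X}$. Non-expansiveness of $P_{\mathcal{X}}$ then gives $\|e_{i,k}\| \le \alpha_{i,k}\|\nabla f_i(z_{i,k})\| \le \alpha_{i,k} G$. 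Here $G$ is the uniform gradient bound implied by Assumption~\ref{a2} and convexity (Assumption~\ref{a3}) on the compact set $\mathcal{X}$.

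The key step is to show $\alpha_{i,k}\to 0$ at rate $\mathcal{O}(1/\sqrt{k})$. From the update rule, $c_k\alpha_{i,k} = \min\{\max\{\beta_{i,k}, c_0\alpha_0/2\}, c_{k-1}\alpha_{i,k-1}\}$. Hence $c_k\alpha_{i,k} \le c_{k-1}\alpha_{i,k-1}$, and by induction $c_k\alpha_{i,k} \le c_{-1}\alpha_{i,-1}$, a fixed constant $A$. Depending on the convention for $c_{-1}$, this is $c_0\alpha_0$ or we take $A=\max\{c_0\alpha_0,\beta_{i,0}\}$. Either way Lemma~\ref{bound} guarantees $A$ is finite. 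With $c_k=\sqrt{k+1}$ this yields
\[
\alpha_{i,k} \le \frac{A}{\sqrt{k+1}} \to 0.
\]
Consequently $\|e_{i,k}\|\le AG/\sqrt{k+1}\to 0$. Importantly, this bound does not depend on how the level-values $\bar f_i^k$ evolve. Lemma~\ref{psvd} is needed only to ensure $\bar f_i^k<f_i^\star$, so that $\beta_{i,k}$ is positive and well defined. In the degenerate case $\nabla f_i(z_{i,k})=0$ the step is null and $e_{i,k}=0$.

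Next, stack the iterates and write $\bar x_k = \frac1n\sum_i x_{i,k}$. Since $W$ is doubly stochastic, $\bar x_{k+1} = \bar x_k + \frac1n\sum_i e_{i,k}$, and unrolling the recursion gives
\[
x_{i,k+1} - \bar x_{k+1} = \sum_j \Bigl([W^{k+1}]_{ij} - \tfrac1n\Bigr)x_{j,0} + \sum_{s=0}^{k}\sum_j \Bigl([W^{k-s}]_{ij}-\tfrac1n\Bigr)e_{j,s}.
\]
Connectivity together with positive diagonal weights makes $W$ primitive, so $|[W^t]_{ij}-1/n|\le C\lambda^t$ with $\lambda<1$. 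This yields
\[
\|x_{i,k+1}-\bar x_{k+1}\| \le C\lambda^{k+1}\sum_j\|x_{j,0}\| + nCG\sum_{s=0}^k \lambda^{k-s}\max_j\alpha_{j,s}.
\]

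The main obstacle is the last convolution sum. It tends to zero by the standard lemma that $\sum_{s\le k}\lambda^{k-s}\gamma_s\to0$ whenever $\gamma_s\to0$ and $\lambda\in(0,1)$. To prove it, split the sum at $s=\lfloor k/2\rfloor$. The early part is bounded by $\lambda^{k/2}\sup_s\gamma_s/(1-\lambda)$, and the late part by $\sup_{s\ge k/2}\gamma_s/(1-\lambda)$; both vanish as $k\to\infty$. Hence $\|x_{i,k}-\bar x_k\|\to0$ for every $i$, and the triangle inequality gives $\lim_{k\to\infty}\|x_{i,k}-x_{j,k}\|=0$. As a byproduct, the explicit rate $\mathcal{O}(1/\sqrt{k})$ on the consensus error is available for later use.
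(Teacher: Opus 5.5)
Your proof is correct and follows the perturbed-consensus argument of \cite{Nedic2010}, which the paper invokes without giving details; the adaptation needed here, heterogeneous per-agent stepsizes, is handled correctly through the uniform bound $\alpha_{i,k}\le c_0\alpha_0/c_k$ (Lemma~\ref{bound}) and the use of $\max_j\alpha_{j,s}$ in the convolution sum. There are two harmless slips: convexity of $\mathcal{X}$ comes from Assumption~\ref{a2}, not Assumption~\ref{a1}; and $\bar f_i^k<f_i^\star$ alone does not make $\beta_{i,k}$ positive, since $f_i(z_{i,k})$ may lie below $f_i(x^\star)$, but your argument never uses that positivity.
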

	\begin{IEEEproof}
The proof is similar to that of \cite{Nedic2010}.
\end{IEEEproof}
\begin{lemma}\label{lemma 7}
	Suppose that Assumptions 1--3 hold. Let $\{\bar{x}_k\}$, $\{z_{i,k}\}$, and $\{x_{i,k}\}$ be the sequences generated by Algorithm \ref{algorithm1}. Then, the average squared distance to $x^{\star}$ satisfies
	\begin{align}\label{lemma7}
		&\frac 1n \sum_{i=1}^n \| x_{i,k+1} - x^{\star} \|^2  - \frac 1n \sum_{i=1}^n \| x_{i,k} - x^{\star} \|^2 \notag \\
		&\leq \alpha_{\text{min}}\left(2 -\frac{\gamma}{c_k}\right) \left[ G \frac 1n \sum_{i=1}^n \|z_{i,k}-\bar{x}_k\| - \left(f(\bar{x}_k)-f(x^{\star})\right) \right] \notag \\
		&\quad + \frac{\gamma c_0 \alpha_0}{c_k^2} \frac 1n \sum_{i=1}^n \left(f_i(x^{\star})-\bar{f}_i^k\right),
	\end{align}
	where $\bar{x}_k \coloneqq \frac{1}{n} \sum_{i=1}^n x_{i,k}$ and $\alpha_{\text{min}} = \min\left\{ \frac{\gamma}{2c_k L_{\text{max}}}, \frac{c_0\alpha_0}{c_k} \right\}$.
\end{lemma}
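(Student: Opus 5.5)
This is the standard one-step distance recursion for projected distributed gradient methods, with the Polyak-type stepsize bounded below and above. The plan is to work agent by agent and then average.

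\textbf{Step 1: one-step expansion.} By nonexpansiveness of $P_{\mathcal X}$ and $x^\star\in\mathcal X$,
\begin{align*}
\|x_{i,k+1}-x^\star\|^2 \le \|z_{i,k}-x^\star\|^2 - 2\alpha_{i,k}\nabla f_i(z_{i,k})^{\text{T}}(z_{i,k}-x^\star) + \alpha_{i,k}^2\|\nabla f_i(z_{i,k})\|^2 .
\end{align*}
Averaging over $i$ and using convexity of $\|\cdot\|^2$ with double stochasticity of $W$ gives $\frac1n\sum_i\|z_{i,k}-x^\star\|^2\le \frac1n\sum_i\|x_{i,k}-x^\star\|^2$. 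This produces the left-hand side of \eqref{lemma7}.

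\textbf{Step 2: control the quadratic term.} I will use the definition of $\alpha_{i,k}$ together with Lemma~\ref{bound}. The quantity $c_k\alpha_{i,k}$ is a min/max clipping of $\beta_{i,k}$, so $\alpha_{i,k}\le \max\{\beta_{i,k},c_0\alpha_0/2\}/c_k$. Split into two cases.
\begin{itemize}
\item[(a)] If $\beta_{i,k}\ge c_0\alpha_0/2$, then $\alpha_{i,k}^2\|\nabla f_i\|^2\le \frac{\gamma}{c_k}\alpha_{i,k}\big(f_i(z_{i,k})-\bar f_i^k\big)$.
\item[(b)] Otherwise $\alpha_{i,k}\le c_0\alpha_0/(2c_k)$, and $\|\nabla f_i\|^2< \gamma(f_i(z_{i,k})-\bar f_i^k)\cdot 2/(c_0\alpha_0)$. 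This yields the same form of bound.
\end{itemize}
Next write $f_i(z_{i,k})-\bar f_i^k=(f_i(z_{i,k})-f_i(x^\star))+(f_i(x^\star)-\bar f_i^k)$. The second piece, multiplied by $\frac{\gamma}{c_k}\alpha_{i,k}\le \frac{\gamma c_0\alpha_0}{c_k^2}$ (using $c_{k-1}\alpha_{i,k-1}\le c_0\alpha_0$ from the recursion), gives exactly the last term of \eqref{lemma7}.

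\textbf{Step 3: combine the first piece with the linear term.} By convexity, $\nabla f_i(z_{i,k})^{\text{T}}(z_{i,k}-x^\star)\ge f_i(z_{i,k})-f_i(x^\star)$. Hence the net contribution is $-\alpha_{i,k}(2-\gamma/c_k)(f_i(z_{i,k})-f_i(x^\star))$, and the coefficient is nonnegative since $\gamma<2\le 2c_k$.

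\textbf{Step 4: replace each $\alpha_{i,k}$ by $\alpha_{\min}$ (main obstacle).} Here I need a uniform lower bound $\alpha_{i,k}\ge\alpha_{\min}$. The $\max$ in the stepsize gives $c_0\alpha_0/(2c_k)$. When $\beta_{i,k}$ is active, $L$-smoothness (implicitly assumed through $L_{\max}$) gives $f_i(z)-\bar f_i\ge f_i(z)-\min f_i\ge \|\nabla f_i\|^2/(2L_i)$, so $\beta_{i,k}\ge \gamma/(2L_{\max})$. The $\min$ with $c_{k-1}\alpha_{i,k-1}$ then preserves the bound inductively.

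There is a sign problem: $f_i(z_{i,k})-f_i(x^\star)$ can be negative for individual agents, so lowering the coefficient to $\alpha_{\min}$ is not directly valid. I would handle this by writing
\begin{align*}
f_i(z_{i,k})-f_i(x^\star) = \big(f_i(z_{i,k})-f_i(\bar x_k)\big) + \big(f_i(\bar x_k)-f_i(x^\star)\big),
\end{align*}
and bounding $|f_i(z_{i,k})-f_i(\bar x_k)|\le G\|z_{i,k}-\bar x_k\|$ by the gradient bound from Assumption~\ref{a2}. After averaging, $\frac1n\sum_i(f_i(\bar x_k)-f_i(x^\star))=f(\bar x_k)-f^\star\ge0$, which yields the bracket in \eqref{lemma7}. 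The delicate point, which I expect to require either a uniform-stepsize approximation or careful sign bookkeeping with $\alpha_{\min}$ versus the maximal stepsize, is justifying the pull-out of a common factor $\alpha_{\min}$ despite heterogeneous $\alpha_{i,k}$.
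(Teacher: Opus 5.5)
Your Steps 1--3 follow the same route as the paper's appendix proof, but two of your justifications fail. In Step 2(b) the inequality goes the wrong way. The condition $\beta_{i,k}<c_0\alpha_0/2$ means $\|\nabla f_i(z_{i,k})\|^2>2\gamma\bigl(f_i(z_{i,k})-\bar f_i^k\bigr)/(c_0\alpha_0)$. The proof of Lemma~\ref{bound} gives $c_{k-1}\alpha_{i,k-1}\ge c_0\alpha_0/2$, so in this case $c_k\alpha_{i,k}=c_0\alpha_0/2$ exactly, and
\begin{align*}
\alpha_{i,k}^2\|\nabla f_i(z_{i,k})\|^2=\alpha_{i,k}\,\frac{c_0\alpha_0}{2c_k}\,\|\nabla f_i(z_{i,k})\|^2\ge\frac{\gamma}{c_k}\,\alpha_{i,k}\bigl(f_i(z_{i,k})-\bar f_i^k\bigr).
\end{align*}
This is the reverse of what you need. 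The lower clip makes the step longer than the Polyak step $\beta_{i,k}/c_k$. The only control left is $\alpha_{i,k}^2\|\nabla f_i(z_{i,k})\|^2\le c_0^2\alpha_0^2G^2/(4c_k^2)$, which is not of the form $f_i(x^\star)-\bar f_i^k$ and does not appear in \eqref{lemma7}.

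This is not a corner case. If $z_{i,k}\to x^\star$, $\bar f_i^k\to f_i(x^\star)$ and $\nabla f_i(x^\star)\neq0$ (the generic distributed situation), then $\beta_{i,k}\to0$ and (b) is eventually the only active case. The same observation breaks your lower bound in Step 4. The bound $\beta_{i,k}\ge\gamma/(2L_{\max})$ needs $L$-smoothness, which is not among Assumptions 1--3. It also needs $\bar f_i^k\le\inf f_i$, whereas the algorithm only maintains $\bar f_i^k<f_i(x^\star)$ and pushes $\bar f_i^k$ toward $f_i(x^\star)$, which generally exceeds $\inf f_i$. The only valid uniform bounds are $c_0\alpha_0/(2c_k)\le\alpha_{i,k}\le c_0\alpha_0/c_k$. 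If $\gamma/(2L_{\max})>c_0\alpha_0/2$, a clipped agent has $\alpha_{i,k}<\alpha_{\min}$.

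The common-factor step you left open is the genuinely missing idea, and it is not a matter of sign bookkeeping. The consensus term $(2-\gamma/c_k)\alpha_{i,k}G\|z_{i,k}-\bar x_k\|$ needs an \emph{upper} bound on $\alpha_{i,k}$, which yields $c_0\alpha_0/c_k$ rather than $\alpha_{\min}$. For the descent term, even with the valid lower bound $\underline{\alpha}_k=c_0\alpha_0/(2c_k)$ you would need $\sum_i(\alpha_{i,k}-\underline{\alpha}_k)\bigl(f_i(\bar x_k)-f_i(x^\star)\bigr)\ge0$. This fails whenever agents with larger steps (e.g. from different clipping histories) have $f_i(\bar x_k)<f_i(x^\star)$. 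The only general bound available is of the form $-\frac{c_0\alpha_0}{2c_k}\sum_i|f_i(\bar x_k)-f_i(x^\star)|$. That is of the same order $1/c_k$ as the descent term $\alpha_{\min}(f(\bar x_k)-f^\star)$, so it cannot be absorbed there, nor into the $O(1/c_k^2)$ level-gap term for large $k$.

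A provable inequality along this route must therefore carry additional error terms. Alternatively, it needs an argument that the agents' stepsizes coincide, e.g. once all are clipped at $c_0\alpha_0/(2c_k)$, with the quadratic term then handled through $G^2$ as in standard DGD. The paper's own proof does not supply this idea either. It applies the Polyak-ratio bound to the quadratic term without any case split, then passes from the individual $\alpha_{i,k}$ to $\alpha_{\min}$ in a single sentence. Your proposal reproduces its argument together with these unjustified steps.
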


\begin{IEEEproof}
	See Appendix \ref{L4proof}.
\end{IEEEproof}
\begin{lemma}\label{lemma8}
	Suppose Assumptions 1-3 hold, and with the parameter $c_k=\sqrt{k+1}$, it can be shown that there exists a subsequence $\{y(k_\nu)\}$ of the iterates $\{\bar{x}_k\}$ such that $\lim_{\nu \to \infty} f(y(k_\nu)) - f(x^{\star}) = 0$. For convenience in the subsequent analysis, we define $\sigma = f(x^{\star}) - \bar{f}^0$, where $\bar{f}^0$ is the initial common level-value for all agents.
\end{lemma}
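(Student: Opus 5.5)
We argue by contradiction. The key ingredients are Lemma~\ref{lemma 7}, the consensus property of Lemma~\ref{lemma0}, and the boundedness of the level-value gaps that follows from Lemma~\ref{psvd}. Suppose no subsequence of $\{\bar{x}_k\}$ has $f(\bar{x}_k)\to f(x^{\star})$. Since $f(\bar{x}_k)\ge f(x^{\star})$ for all $k$ (because $\bar{x}_k\in\mathcal{X}$ by convexity), there then exist $\epsilon>0$ and $K_0$ such that $f(\bar{x}_k)-f(x^{\star})\ge\epsilon$ for all $k\ge K_0$.

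First, control the level-value term. By Lemma~\ref{psvd}, each update keeps $\bar{f}_i^k$ strictly below $f_i^{\star}$ and nondecreasing in $k$. Hence
\[
0< f_i(x^{\star})-\bar{f}_i^k\le f_i(x^{\star})-\bar{f}_i^0 .
\]
With a common initial level this gives $\frac1n\sum_i (f_i(x^{\star})-\bar{f}_i^k)\le\sigma$. The last term in \eqref{lemma7} is then at most $\gamma c_0\alpha_0\sigma/(k+1)$.

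Second, control the consensus term. Lemma~\ref{lemma0} gives $\|x_{i,k}-x_{j,k}\|\to0$. Since $z_{i,k}-\bar{x}_k=\sum_j w_{ij}(x_{j,k}-\bar{x}_k)$, it follows that $G\frac1n\sum_i\|z_{i,k}-\bar{x}_k\|\to0$. Enlarging $K_0$ if necessary, this quantity is at most $\epsilon/2$ for $k\ge K_0$. Here $G$ is the uniform gradient bound on the compact set $\mathcal{X}$ from Assumption~\ref{a2}.

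Third, telescope. For $k\ge K_0$, \eqref{lemma7} yields
\[
D_{k+1}-D_k\le -\alpha_{\min}\Bigl(2-\tfrac{\gamma}{c_k}\Bigr)\tfrac{\epsilon}{2}+\tfrac{\gamma c_0\alpha_0\sigma}{k+1},
\qquad D_k:=\tfrac1n\sum_i\|x_{i,k}-x^{\star}\|^2 .
\]
With $c_k=\sqrt{k+1}$ we have $\alpha_{\min}=\kappa/\sqrt{k+1}$, where $\kappa=\min\{\gamma/(2L_{\max}),c_0\alpha_0\}$. Also $2-\gamma/c_k\ge 2-\gamma>0$, since $c_k\ge1$ and $\gamma<2$. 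Summing from $K_0$ to $T$ gives
\[
D_{T+1}\le D_{K_0}-\tfrac{\epsilon\kappa(2-\gamma)}{2}\sum_{k=K_0}^T\tfrac{1}{\sqrt{k+1}}+\gamma c_0\alpha_0\sigma\sum_{k=K_0}^T\tfrac1{k+1}.
\]
The negative sum grows like $\sqrt{T}$ and the positive one only like $\log T$, so the right-hand side tends to $-\infty$. This contradicts $D_{T+1}\ge0$. Hence $\liminf_k f(\bar{x}_k)-f(x^{\star})=0$, and we may pick the subsequence $y(k_\nu)=\bar{x}_{k_\nu}$ attaining this limit.

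\textbf{Main obstacle.} The delicate step is the uniform bound on the level gap $f_i(x^{\star})-\bar{f}_i^k$. It depends on Lemma~\ref{psvd} guaranteeing both monotonicity and $\bar{f}_i^k<f_i^{\star}$ for every agent along the whole run. Note that Lemma~\ref{psvd} as stated is phrased with $f$ rather than $f_i$. If the level-values were not common initially, I would instead set $\sigma=\max_i(f_i^{\star}-\bar{f}_i^0)$. A secondary check is that the $(2-\gamma/c_k)$ factor stays positive, which holds for all $k$ because $c_k\ge1$ and $\gamma<2$.
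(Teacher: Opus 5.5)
Your proposal is correct and follows essentially the same route as the paper: assume $f(\bar{x}_k)-f(x^{\star})\ge\epsilon$ for contradiction, combine Lemma~\ref{lemma 7} with the consensus bound and the level-gap bound $\frac1n\sum_i(f_i(x^{\star})-\bar{f}_i^k)\le\sigma$ from Lemma~\ref{psvd}, then telescope against the divergent $\sum_k 1/\sqrt{k+1}$. The only difference is cosmetic: you keep the positive $\gamma c_0\alpha_0\sigma/(k+1)$ term and compare $\sqrt{T}$ growth against $\log T$ growth after summing, whereas the paper absorbs that term step by step into the $-\mathcal{O}(\epsilon/c_k)$ term for large $k$ (your version is slightly cleaner and makes the positivity of $2-\gamma/c_k$ explicit).
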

\begin{IEEEproof}
	See Appendix \ref{L5proof}.
\end{IEEEproof}
With consensus guaranteed by Lemma \ref{lemma0} and the gap relationship established in Lemmas \ref{lemma 7} and \ref{lemma8}. We establish the convergence result of the proposed algorithm.
\begin{theo}[Convergence of DPS-LA]\label{theo1}
	Suppose that Assumptions 1--3 hold. The iterative sequences $\{x_{i,k}\}$ generated by Algorithm \ref{algorithm1} converge to a common optimal solution $x^{\star} \in \mathcal{X}^{\star}$ in the sense that 
	\begin{equation*}
		\lim_{k \to \infty} x_{i,k} = x^{\star}, \quad \forall i \in \mathcal{V}.
	\end{equation*}
	\end{theo}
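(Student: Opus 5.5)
The plan is the standard Nedi\'c--Ozdaglar style argument, adapted to the Polyak-type stepsize: a quasi-Fej\'er inequality plus subsequence convergence, combined with consensus.

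\textbf{Step 1: reduce to the average.} By Lemma~\ref{lemma0}, $\|x_{i,k}-\bar{x}_k\|\to 0$ for every $i$. Hence $\|z_{i,k}-\bar{x}_k\|\le \sum_j w_{ij}\|x_{j,k}-\bar{x}_k\|\to 0$. It therefore suffices to prove that $\bar{x}_k$ converges to some point of $\mathcal{X}^\star$.

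\textbf{Step 2: quasi-Fej\'er recursion.} For any $x^\star\in\mathcal{X}^\star$, define $D_k(x^\star)=\frac1n\sum_i\|x_{i,k}-x^\star\|^2$. Lemma~\ref{lemma 7} gives
\[
D_{k+1}(x^\star)\le D_k(x^\star)-a_k\bigl(f(\bar{x}_k)-f^\star\bigr)+b_k,
\]
with $a_k=\alpha_{\min}(2-\gamma/c_k)\ge 0$ and
\[
b_k= \alpha_{\min}(2-\tfrac{\gamma}{c_k})\,G\,\tfrac1n\textstyle\sum_i\|z_{i,k}-\bar{x}_k\| +\frac{\gamma c_0\alpha_0}{c_k^2}\,\tfrac1n\sum_i\bigl(f_i(x^\star)-\bar{f}_i^k\bigr).
\]

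I then need $\sum_k b_k<\infty$.

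\emph{Second term of $b_k$.} By Lemma~\ref{psvd}, $\bar{f}_i^k$ is nondecreasing and stays below $f_i^\star$. So $0< f_i(x^\star)-\bar{f}_i^k\le \sigma$, and with $c_k=\sqrt{k+1}$ this term is bounded by $\gamma c_0\alpha_0\sigma/(k+1)$. This is only borderline summable.

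\emph{First term of $b_k$.} Here $\alpha_{\min}=O(1/\sqrt{k+1})$, so I need the standard DGD estimate $\sum_k \frac{1}{\sqrt{k+1}}\|x_{i,k}-\bar{x}_k\|<\infty$. This follows from the geometric contraction of $W$ together with the perturbation bound $\|\alpha_{i,k}\nabla f_i\|\le \alpha_{i,k}G$, the monotonicity $\alpha_{i,k}\le\alpha_{i,k-1}$ from Lemma~\ref{bound}, and $\sum_k \alpha_k^2<\infty$.

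\textbf{Main obstacle.} The difficulty is the second term, whose naive bound $\sigma/(k+1)$ is not summable. I would first try to sharpen it using the level-value mechanism. Lemma~\ref{lemma8} supplies a subsequence along which $f(\bar{x}_k)\to f^\star$, and together with consensus this makes $\min_k f_i(z_{i,k})$ approach $f_i(x^\star)$ within windows. Each infeasibility event contracts the gap by the factor $\gamma/\bar\gamma$, so $f_i(x^\star)-\bar{f}_i^k$ should decay; some polynomial rate would suffice to make the weighted sum converge. If that fails, the fallback is the ratio $\gamma c_0\alpha_0/c_k^2$ against $a_k\sim 1/\sqrt{k}$. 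The term is then of lower order, and I would run a Robbins--Siegmund-type argument with the level-gap absorbed in the drift, combined with the subsequence of Lemma~\ref{lemma8}.

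\textbf{Step 3: conclusion.} Granting summability of $b_k$, the deterministic Robbins--Siegmund lemma gives two facts: $D_k(x^\star)$ converges for every $x^\star\in\mathcal{X}^\star$, and $\sum_k a_k(f(\bar{x}_k)-f^\star)<\infty$. Since $\mathcal{X}$ is compact, the subsequence of Lemma~\ref{lemma8} has a further subsequence $\bar{x}_{k_\nu}\to\tilde{x}$. Continuity of $f$ gives $f(\tilde{x})=f^\star$, so $\tilde{x}\in\mathcal{X}^\star$. Applying Step 2 with $x^\star=\tilde{x}$, the full sequence $D_k(\tilde{x})$ converges. Along $k_\nu$ its limit is $0$, by consensus and $\bar{x}_{k_\nu}\to\tilde{x}$. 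Hence $D_k(\tilde x)\to0$, i.e., $x_{i,k}\to\tilde{x}$ for all $i$, which proves Theorem~\ref{theo1}.
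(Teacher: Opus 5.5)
Your Step 3 rests on $\sum_k b_k<\infty$. With $c_k=\sqrt{k+1}$ (the regime of Lemmas~\ref{lemma0} and~\ref{lemma8}, which you use), neither part of $b_k$ is summable, so this is a genuine gap.

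\emph{Consensus part.} You invoke $\sum_k\alpha_k^2<\infty$, but the proof of Lemma~\ref{bound} gives $\alpha_{i,k}\ge c_0\alpha_0/(2\sqrt{k+1})$, so $\sum_k\alpha_{i,k}^2=\infty$. The DGD estimate then only yields $\|z_{i,k}-\bar{x}_k\|\le\tau_k=O(1/\sqrt{k})$, hence $\alpha_{\min}G\frac1n\sum_i\|z_{i,k}-\bar{x}_k\|=O(1/k)$. For heterogeneous $f_i$ the disagreement is generically of order $\alpha_k$, so this is the true order, not a loose bound. Consistently, the paper's own estimates \eqref{eq2t2} and \eqref{eq4t2} are harmonic sums, bounded only over windows $[\lfloor T/2\rfloor,T]$.

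\emph{Level-gap part.} Your route (a) fails in general. Suppose $z_{i,k}\to x^\star$ (the very conclusion you want) and $\nabla f_i(x^\star)\neq0$, which is the typical distributed situation. Then eventually every new cut in \eqref{12} has normal satisfying $\nabla f_i(z_{i,k})^{\text{T}}\nabla f_i(x^\star)\ge\frac12\|\nabla f_i(x^\star)\|^2$. The system is posed over $\mathbb{R}^m$, so it is satisfied by $x=-M\nabla f_i(x^\star)$ for large $M$. After at most one further reset the level value freezes strictly below $f_i(x^\star)$, and $f_i(x^\star)-\bar{f}_i^k$ stays bounded away from $0$.

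Thus $\sum_k b_k$ diverges logarithmically and Robbins--Siegmund does not apply. Nothing then shows that $D_k(\tilde{x})$ converges for a fixed $\tilde{x}\in\mathcal{X}^\star$, which is exactly what your final Opial-type step needs.

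Your fallback idea, that the $O(1/c_k^2)$ terms are dominated by the drift, is what the paper uses. It works only away from the solution set, and it delivers less than your Step 3. The paper fixes $\epsilon$ and tracks $\min_{x^\star\in\mathcal{X}^\star}D_k(x^\star)$, in two cases:
\begin{itemize}
\item If $f(\bar{x}_k)-f^\star\ge\epsilon$, the drift $a_k\epsilon\sim\epsilon/\sqrt{k}$ eventually beats the $O(1/k)$ level term and the $o(a_k)$ consensus term, so this quantity does not increase.
\item If $f(\bar{x}_k)-f^\star<\epsilon$, then $\bar{x}_k\in\mathcal{D}_\epsilon$ lies within $d(\epsilon)$ of $\mathcal{X}^\star$. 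The next value is then bounded by $2\bigl(d(\epsilon)^2+\epsilon^2/4+(c_0\alpha_0G/c_k)^2\bigr)$.
\end{itemize}
The outcome is $\min_{x^\star\in\mathcal{X}^\star}\frac1n\sum_i\|x_{i,k}-x^\star\|^2\to0$. This is convergence to the solution set, which is a single limit only when $\mathcal{X}^\star$ is a singleton. It never requires $D_k(\tilde{x})$ to converge at a fixed point.

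Your quasi-Fej\'er/Opial route would close, and would give a genuine single limit point, under a schedule such as $c_k=(k+1)^p$ with $1/2<p\le1$. There both parts of $b_k$ are $O(1/c_k^2)$ and summable, while $\sum_k a_k=\infty$ still drives the subsequence argument. That is not the $c_k=\sqrt{k+1}$ setting your cited lemmas cover.
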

\begin{IEEEproof}
	Define the $\epsilon$-sublevel set $\mathcal{D}_{\epsilon} = \{ y \in \mathcal{X} : f(y) - f(x^{\star}) \leq \epsilon \}$ and let $d(\epsilon) = \sup_{y \in \mathcal{D}_{\epsilon}} \text{dist}(y, \mathcal{X}^{\star})$. By convexity and compactness, $\lim_{\epsilon \to 0} d(\epsilon) = 0$. 

Similar the proof of Lemma \ref{lemma8},
when the $c_k=\sqrt{k+1}$, which implies decaying stepsizes for \(\alpha_{i,k}\), there exists a constant \(K'_1 \in \mathbb{N}^+\) such that for all \(k > K'_1\), the \(c_k\) satisfies
\[
\frac{1}{c_k}\leq \frac{\epsilon}{G \sqrt{\sigma \gamma c_0 \alpha_0}}.
\]

Additionally, there exists a constant \(K'_2 \in \mathbb{N}^+\) such that for all \(k > K'_2\), stepsize \(\alpha_{i,k}\)  satisfies
\[
\alpha_{i,k}\leq \frac{2\epsilon}{3},
\]
under Assumption 1, and for all \(k > K'_3\), by the consensus result, we get
\[
\frac{1}{n} \sum_{i=1}^{n} \|{\bar{x}_k - z_{i,k}}\| \leq \frac{\epsilon}{4G},
\]
for the difference between \(\|{\bar{x}_k - x^{\star}}\|\) and \(\|{x_{i,k} - x^{\star}}\|\), we have by the triangle inequality
\begin{align*}
	&\left| \|\bar{x}_k - x^{\star}\|^2 - \|x_{i,k} - x^{\star}\|^2 \right| \\
	&= \left| \|\bar{x}_k - x^{\star}\| - \|x_{i,k} - x^{\star}\| \right| \left( \|\bar{x}_k - x^{\star}\| + \|x_{i,k} - x^{\star}\| \right) \\
	&\leq \|\bar{x}_k - x_{i,k}\|\left( \|\bar{x}_k - x^{\star}\| + \|x_{i,k} - x^{\star}\| \right) ,
\end{align*}
then, from Lemma \ref{lemma0}, we know  \(\lim_{k\to \infty} \|{\bar{x}_k - x_{i,k}}\| =0\). Furthermore, since the underlying constraint set is bounded, it follows that both \(\|{\bar{x}_k - x^{\star}}\|\) and \(\|{x_{i,k} - x^{\star}}\|\) are bounded for all \(k\). This implies that if \(\bar{x}_k\) converges to \(x^{\star}\), then \(x_{i,k}\) also converges to \(x^{\star}\),
furthermore, for all \(k > K'_4\) (for some \(K_d \in \mathbb{N}^+\)), the average difference in squared distances can be bounded
\begin{align*}
	&|(  \|\frac{1}{n} \sum_{i=1}^n\bar{x}_k - \frac{1}{n} \sum_{i=1}^n x^{\star}\|^2 - \|\frac{1}{n} \sum_{i=1}^n x_{i,k} - \frac{1}{n} \sum_{i=1}^n x^{\star}\|^2) |\\
	&\leq \frac{1}{n} \sum_{i=1}^n |( \|\bar{x}_k - x^{\star}\|^2 - \|x_{i,k}- x^{\star}\|^2) |\\&\leq \frac{\epsilon^2}{4},
\end{align*}
from Lemma \ref{lemma8}, we analyze the iterate behavior in two cases:

\textit{Case 1:} $f(\bar{x}_k) < f(x^{\star}) + \epsilon$. In this case, $\bar{x}_k \in \mathcal{D}_\epsilon$. Using the property $\| a+b \|^2 \leq 2\| a \|^2 + 2\| b \|^2$ and the algorithm's update rule, we have
\begin{align*}
	&\frac{1}{n}\sum_{i=1}^{n}\|x_{i,k+1} - x^{*}\|^{2} \\
	&\leq	\frac{1}{n}\sum_{i=1}^{n}\| z_{i,k} - \alpha_{i,k} \nabla f_{i}^k  - x^{\star} \|^2 \\
	&\leq 2\left(\frac{1}{n}\sum_{i=1}^{n}\left\|z_{i,k}-  x^{\star}\right\|^{2} + \alpha_{i,k}^{2}\|\nabla f_i^k\|^2\right) \\
	&\leq 2\left(\frac{1}{n}\sum_{i=1}^{n}\|x_{i,k} -  x^{\star}\|^{2} +( \frac{c_0\alpha_0}{c_k}G)^2\right),
\end{align*}
when \(k > K_{4}^{\prime}\)
\begin{equation}\label{x bound t1}
	\begin{aligned}[b]
		&	\min_{x^{*} \in \mathcal{X}^{\star}} \frac{1}{n}\sum_{i=1}^{n}\|x_{i,k+1} -  x^{\star}\|^{2} \\
		&\leq \min_{x^{*} \in \mathcal{X}^{\star}} 2\left(\frac{1}{n}\sum_{i=1}^{n}\|x_{i,k} -  x^{\star}\|^{2} +( \frac{c_0\alpha_0}{c_k}G)^2\right) \\
		&\leq 2\min_{ x^{\star} \in \mathcal{X}^{\star}}\left(\|\bar{x}_k -  x^{\star}\|^{2} + \frac{\epsilon^{2}}{4} + (\frac{c_0\alpha_0}{c_k}G)^2\right) \\
		&\leq 2\left(d(\epsilon)^{2} + \frac{\epsilon^{2}}{4} +( \frac{c_0\alpha_0}{c_k}G)^2\right),
	\end{aligned}
\end{equation}
where the second inequality uses the bound on the difference between the average of squared norms and the squared norm of the average, and the third inequality follows from the definition of \(d(\epsilon)\).

\textit{Case 2:} $f(\bar{x}_k) \geq f(x^{\star}) + \epsilon$. Similar to the derivation in Lemma \ref{lemma8}, the evolution follows: when \(k > \max\{K_{1}^{\prime}, K_{3}^{\prime}\}\),
%\begin{align}
%		&\frac{1}{n}\sum_{i=1}^{n} \|x_{i,k+1} - x^{*}\|^{2} 
%-  \frac 1n \sum_{i=1}^n \| x_{i,k} - x^{\star} \|^2\notag \\ & \leq  G\alpha_{\text{min}}(2 -\frac{\gamma}{c_k})\frac 1n \sum_{i=1}^n  \|z_{i,k}-\bar{x}_k\|\notag\\
%	&\quad - G\alpha_{\text{min}}(2 -\frac{\gamma}{c_k})(f(\bar{x}_k)-f(x^{\star}))\notag \\
%	&\quad+ \frac{\gamma c_0 \alpha_0}{c_k^2} \frac 1n \sum_{i=1}^n   (f_i(x^{\star})-\bar{f}_i^k)\notag\\
%	&\leq \alpha_{\text{min}}(2 -\frac{\gamma}{c_k}) \frac{\epsilon}{4}- \alpha_{\text{min}}(2 -\frac{\gamma}{c_k}) \epsilon \notag \\
%		&\quad+ \frac{\gamma c_0 \alpha_0}{c_k^2} \frac 1n \sum_{i=1}^n   (f_i(x^{\star})-\bar{f}_i^k)),\notag
%\end{align}
%building upon Lemma \ref{lemma8}, we establish the  
%$
%\frac{1}{n}\sum_{i=1}^{n}(f_i(x^{\star})-\bar{f}_i^k) \leq f(x^{\star})-\bar{f}^0=\sigma,
%$
%%There exists a sufficiently large integer $K_5^{'} \in \mathbb{N}^+$ such that for all $k > K_5^{'}$, the avrage optimality gap satisfies:
%%\[
%%\frac{1}{n}\sum_{i=1}^{n}(f_i(x^{\star})-\bar{f}_i^k)-\sigma \leq \epsilon
%%\]
%%here, $K_0 = \max\{K_1^{'}, K_2^{'}, K_3^{'}\}$ encapsulates the conditions required for this bound to hold. 
%under this condition, the evolution of the average squared distance of the iterates $x_{i,k}$ to an optimal solution $x^{\star}$ can be bounded, we have
\begin{equation}\label{x bound t1,1}
	\begin{aligned}
		&\frac{1}{n}\sum_{i=1}^{n}\|x_{i,k+1} -  x^{\star}\|^{2} - \frac{1}{n}\sum_{i=1}^{n}\|x_{i,k}- x^{*}\|^{2}\notag\\
		& \leq \sigma \left( \frac{\gamma c_0 \alpha_0}{c_k^2} - G\alpha_{\text{min}}\left(2 -\frac{\gamma}{c_k}\right) \frac{3\epsilon}{4\sigma} \right)\notag.
	\end{aligned}
\end{equation}
The term in the parenthesis scales as $\mathcal{O}(1/c_k^2) - \mathcal{O}(\epsilon/c_k)$. For sufficiently small $\epsilon$, the $-\mathcal{O}(\epsilon/c_k)$ term dominates, ensuring:
\begin{equation}\label{eq:case2}
	\begin{aligned}[b]
		\min_{x^{\star} \in \mathcal{X}^{\star}} \frac{1}{n} \sum_{i=1}^n \| x_{i,k+1} - x^{\star} \|^2 \leq \min_{x^{\star} \in \mathcal{X}^{\star}} \frac{1}{n} \sum_{i=1}^n \| x_{i,k} - x^{\star} \|^2.
	\end{aligned}
\end{equation}
%the crucial step involves demonstrating that the second term in the inequality, which represents the change in the average squared distance, becomes negative for sufficiently small $\epsilon$. Let's analyze the term within the parenthesis:
%$\frac{\gamma c_0 \alpha_0}{c_k^2} -G\alpha_{\text{min}}(2 -\frac{\gamma}{c_k})\frac{3\epsilon}{4\sigma}\leq \frac{\gamma \epsilon^2}{G^2\sigma} + \frac{\alpha_{\text{min}}\gamma\epsilon}{\sqrt{\sigma \gamma c_0 \alpha_0}}\frac{3\epsilon}{4\sigma}-2G\alpha_{\text{min}} =\mathcal{O}(\epsilon^2)+ \mathcal{O}(\epsilon^3)-\mathcal{O}(\epsilon)$.
%It is evident from this asymptotic analysis that when $\epsilon$ is sufficiently small, the dominant term is $-\mathcal{O}(\epsilon)$, rendering the entire expression negative. Consequently, the term $\sigma \left( \frac{\gamma c_0 \alpha_0}{c_k^2} - G\alpha_{\text{min}}\left(2 -\frac{\gamma}{c_k}\right) \frac{3\epsilon}{4\sigma} \right)$ becomes negative. This implies that for a sufficiently small $\epsilon$, the average squared distance to an optimal solution is non-increasing:
%\[
%\min_{x^{*} \in \mathcal{X}^{*}}\frac{1}{n}\sum_{i=1}^{n}\|x_{i,k+1} - x^{*}\|^{2} \leq \min_{x^{*} \in \mathcal{X}^{*}}\frac{1}{n}\sum_{i=1}^{n}\|x_{i,k} - x^{*}\|^{2},
%\]
Combining \eqref{x bound t1} and \eqref{x bound t1,1}, there exists a final threshold $K_{\text{final}} = \max\{K_{1}^{\prime}, K_{2}^{\prime},K_{3}^{\prime},K_{4}^{\prime},\nu \}$, such that for all $k > K_{\text{final}}$, the average squared distance of the iterates to the optimal set converges to zero
\begin{align*}
	\lim_{k\rightarrow \infty}\min_{ x^{\star} \in \mathcal{X}^{\star}}\frac{1}{n}\sum_{i=1}^{n}\|x_{i,k} - x^{\star}\|^{2} = 0,
\end{align*}
this result demonstrates the convergence of the algorithm's iterates to the set of optimal solutions $\mathcal{X}^{\star}$. 
\end{IEEEproof}

\begin{theo}[Convergence rate of DPS-LA]\label{theo2}
	Suppose Assumptions  1--3 hold, and the parameter selection $c_k=\sqrt{k+1}$, the objective optimality gap for the network average $\bar{x}_k$ satisfies
	\begin{equation*}
		\min_{M \leq k \leq T} f(\bar{x}_k) - f(x^{\star}) \leq \mathcal{O}\left(\frac{1}{\sqrt{nT}}\right),
	\end{equation*}
	where $M = \lfloor T/2 \rfloor$.
	\end{theo}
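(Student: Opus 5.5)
The plan is to telescope the one-step inequality of Lemma \ref{lemma 7} over the window $k = M, \dots, T$ with $M = \lfloor T/2\rfloor$. Then bound the optimality gap by the minimum over that window. Write $D_k := \frac1n\sum_{i=1}^n\|x_{i,k}-x^{\star}\|^2$.

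\textbf{Step 1: rearrange the one-step bound.} Lemma \ref{lemma 7} can be rewritten as
\begin{align*}
\alpha_{\min,k}\Big(2-\frac{\gamma}{c_k}\Big)\big(f(\bar x_k)-f^{\star}\big) &\le D_k - D_{k+1} + \alpha_{\min,k}\Big(2-\frac{\gamma}{c_k}\Big) G\,\frac1n\sum_{i=1}^n\|z_{i,k}-\bar x_k\| \\
&\quad + \frac{\gamma c_0\alpha_0}{c_k^2}\,\frac1n\sum_{i=1}^n\big(f_i(x^{\star})-\bar f_i^k\big).
\end{align*}

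\textbf{Step 2: estimate each ingredient under $c_k=\sqrt{k+1}$.}
\begin{itemize}
\item We have $\alpha_{\min,k} = a/c_k$ with $a=\min\{\gamma/(2L_{\max}), c_0\alpha_0\}$.
\item Since $\gamma<2$ and $c_k\ge 1$, the factor $2-\gamma/c_k$ is bounded below by $2-\gamma>0$.
\item By Lemma \ref{psvd}, each level value increases monotonically and stays below $f_i^{\star}$. Hence the level gap satisfies $\frac1n\sum_i (f_i(x^{\star})-\bar f_i^k)\le \sigma$.
\item For the consensus term, I will use the standard DGD perturbation argument behind Lemma \ref{lemma0}, as in \cite{Nedic2010}. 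Let $\lambda<1$ be the second largest singular value of $W$. Then $\|z_{i,k}-\bar x_k\|\le C\big(\lambda^k + \sum_{s<k}\lambda^{k-1-s}\alpha_s G\big)$, and with $\alpha_s\le c_0\alpha_0/c_s$ this gives $\frac1n\sum_i\|z_{i,k}-\bar x_k\| = \mathcal O(1/\sqrt{k})$.
\end{itemize}

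\textbf{Step 3: sum over the window.} Summing from $k=M$ to $T$, the distance terms telescope to at most $D_M\le \mathrm{diam}(\mathcal X)^2$. On the left, $\min_{M\le k\le T}(f(\bar x_k)-f^{\star})$ multiplies $\sum_{k=M}^T a(2-\gamma)/\sqrt{k+1}$, which is $\Theta(\sqrt T)$ because $M=\lfloor T/2\rfloor$. On the right:
\begin{itemize}
\item the level-value term contributes $\sum_k \gamma c_0\alpha_0\sigma/(k+1)=\mathcal O(\log 2)=\mathcal O(1)$, precisely because the window starts at $T/2$;
\item the consensus term contributes $\sum_k \mathcal O(1/k)=\mathcal O(1)$ as well.
\end{itemize}
Dividing through gives a bound of order $\mathcal O(1/\sqrt T)$.

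\textbf{Step 4: recover the $n$-dependence (main obstacle).} The crude argument above only yields $\mathcal O(1/\sqrt T)$. To obtain $\mathcal O(1/\sqrt{nT})$, the constants must be tracked carefully, and the bookkeeping is the crux.
\begin{itemize}
\item First I would avoid bounding $D_M$ by the diameter. Instead, work with the averaged quantity $\|\bar x_{k}-x^{\star}\|^2$ and the network-average recursion $\bar x_{k+1}\approx \bar x_k - \frac1n\sum_i\alpha_{i,k}\nabla f_i(z_{i,k})$.
\item In that recursion the quadratic step term is $\|\frac1n\sum_i \alpha_{i,k}\nabla f_i\|^2$ rather than the average of the individual squares. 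Together with the level-gap average $\frac1n\sum_i(f_i(x^{\star})-\bar f_i^k)$, whose per-agent deviations enter with the $1/n$ weighting, this is where a factor $1/n$ can be extracted.
\item The remaining error terms (consensus error and level gap) then appear at higher order.
\end{itemize}
I would then choose the constant in $c_k$ implicitly, i.e.\ rescale $\alpha_0$ proportionally to $\sqrt n$ when balancing the $D_M/\sqrt T$ term against the $\alpha^2$ term. Optimizing the two yields the stated $\mathcal O(1/\sqrt{nT})$.

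I expect this Step 4 balancing, and in particular justifying that the consensus error does not reintroduce an $n$-independent $1/\sqrt T$ term, to be the main difficulty.
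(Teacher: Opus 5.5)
Your Steps 1--3 are essentially the paper's proof. It also rearranges Lemma~\ref{lemma 7}, bounds $\|z_{i,k}-\bar x_k\|$ by the DGD geometric-sum estimate $\tau_k$, and sums over $k=\lfloor T/2\rfloor,\dots,T$. It then shows that the level-value term (through $\sum_{k=M}^T 1/(k+1)=\mathcal{O}(1)$) and the nested consensus sum are $\mathcal{O}(1)$, while the weight sum is of order $\sqrt{T}$. Your handling of the denominator is actually the correct one: you lower-bound $\sum_{k=M}^T\alpha_{\min}(2-\gamma/c_k)$ using $2-\gamma/c_k\ge 2-\gamma>0$, whereas \eqref{eq1t2} bounds it from above. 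So Steps 1--3 soundly give $\mathcal{O}(1/\sqrt{T})$.

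The genuine gap is Step 4, and it is not a bookkeeping issue: the $1/n$ factor you hope to extract does not exist here.
\begin{itemize}
\item The local gradients are deterministic, so there is no noise to average. The bound $\|\frac1n\sum_i\alpha_{i,k}\nabla f_i(z_{i,k})\|^2\le\frac1n\sum_i\alpha_{i,k}^2\|\nabla f_i(z_{i,k})\|^2$ is only Jensen's inequality, with equality when the local steps coincide.
\item Neither that term nor $\|\bar x_M-x^\star\|^2$ is $\mathcal{O}(1/n)$. The averaged recursion is also inexact, because projections act per agent.
\item Rescaling $\alpha_0\propto\sqrt{n}$ does not help. The descent coefficient $\alpha_{\min}$ is also capped by $\gamma/(2c_kL_{\max})$, which does not involve $\alpha_0$, and the consensus error grows linearly with the stepsize.
\end{itemize}
In fact no argument can give an $n$-independent constant. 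Take $f_i\equiv f$ with identical initial points and level values. Then all agents coincide, $z_{i,k}=x_{i,k}=\bar x_k$, and the iterates are those of a single-agent method whose parameters do not involve $n$. Hence $\min_{M\le k\le T}f(\bar x_k)-f^\star$ is independent of $n$, and it is positive in general. For example, take $f(x)=\frac12x^2$ on $[-1,1]$ with $x_{i,0}=1$ and $\alpha_0<1$: by Lemma~\ref{bound}, every step multiplies the iterate by $1-\alpha_{i,k}\in(0,1)$, so it never reaches $0$. Letting $n\to\infty$ with $T$ fixed contradicts any bound $C/\sqrt{nT}$ with $C$ independent of $n$.

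The paper does not close this gap either, so you should not look for a trick there. Its $\sqrt{n}$ appears only in \eqref{eqt2}. There the averages $\frac1n\sum_i$ become sums $\sum_{i=1}^n$ in the numerators, while only $\sqrt{n}$ (rather than $n$) is placed in the denominator. Then $\sum_{i=1}^n\|x_{i,M}-x^\star\|^2$ and, in \eqref{eq2t2}, $\sum_{i=1}^n(f_i^\star-\bar f_i^0)$ are declared $\mathcal{O}(1)$, although both are of order $n$. The consensus term, which carries no sum over $i$, is likewise divided by $\sqrt{n}$ without justification. With consistent normalization the powers of $n$ cancel, and the argument delivers exactly what your Steps 1--3 deliver: $\mathcal{O}(1/\sqrt{T})$. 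A genuine linear speedup would require a different setting, e.g.\ stochastic local gradients with independent noise across agents. There, averaging shrinks the variance term by $1/n$, and your Step 4 balancing of $\alpha_0$ would make sense.
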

\begin{IEEEproof}
This section establishes the sublinear convergence rate of the proposed algorithm. 
Rearranging the descent inequality from Lemma \ref{lemma7}, we bound the single-step optimality gap as follows
\begin{align}\label{eq:gap_rearranged}
	&\alpha_{\text{min}}(2 -\frac{\gamma}{c_k})(f(\bar{x}_k)-f(x^{\star}))\notag\\
	&\leq  \frac 1n \sum_{i=1}^n \| x_{i,k} - x^{\star} \|^2 +  G\alpha_{\text{min}}(2 -\frac{\gamma}{c_k}) \frac 1n \sum_{i=1}^n  \|z_{i,k}-\bar{x}_k\| \notag
	\\
	&\quad -\frac 1n \sum_{i=1}^n \| x_{i,k+1} - x^{\star} \|^2  + \frac{\gamma c_0 \alpha_0}{c_k^2} \frac 1n \sum_{i=1}^n   (f_i(x^{\star})-\bar{f}_i^k).
\end{align}
To bound the consensus error $\| z_{i,k} - \bar{x}_k \|$, we invoke the standard result for distributed gradients \cite{Nedic2010}
\begin{align}\label{eq:consensus_bound}
	&\| z_{i,k} - \bar{x}_k \| \notag\\
	&\leq C_1 \beta^{k-1} + C_2 \sum_{t=0}^{k-2} \beta^{k-t} \frac{\gamma c_0 \alpha_0}{c_t} + \frac{4 \gamma c_0 \alpha_0 G}{c_k^2} \triangleq \tau_k,
\end{align}
where $0 < \beta < 1$, and $C_1, C_2 > 0$ are constants related to the network topology.

Summing \eqref{eq:gap_rearranged} over $k = M, \dots, T$ and utilizing the telescoping property of the squared distances, we obtain
\begin{align}\label{eqt2}
	&\min\limits_{M\leq k \leq T}f(\bar{x}_k)-f(x^{\star}) \notag\\
	&	\leq \frac{ \sum_{i=1}^n \| x_{i,M} - x^{\star} \|^2}{\sqrt{n} \sum_{k=M}^T \alpha_{\text{min}}(2 -\frac{\gamma}{c_k})}\notag\\
	&\quad +\frac{\sum_{k=M}^TG\alpha_{\text{min}}(2 -\frac{\gamma}{c_k})\tau_k - \sum_{i=1}^n \| x_{i,T+1} - x^{\star} \|^2 }{\sqrt{n} \sum_{k=M}^T\alpha_{\text{min}}(2 -\frac{\gamma}{c_k})}\notag\\ %
	&\quad +\frac{\sum_{k=M}^T\frac{\gamma c_0 \alpha_0}{c_k^2} \frac 1n \sum_{i=1}^n   (f_i(x^{\star})-\bar{f}_i^k) }{\sqrt{n} \sum_{k=M}^T\alpha_{\text{min}}(2 -\frac{\gamma}{c_k})}.
\end{align}
We now analyze the asymptotic behavior of each component in \eqref{eqt2} term by term. For the case $c_k = \sqrt{k+1}$, we have
\begin{equation}\label{eq1t2}
	\sqrt{n} \sum_{k=M}^T\alpha_{\text{min}}(2 -\frac{\gamma}{c_k})
	\leq 	\sqrt{n} \sum_{k=M}^T  \frac{c_0\alpha_0}{2\sqrt{k+1}} = \mathcal{O}(\sqrt{nT}),
\end{equation}
the initial average squared distance, $\| x_{i,M} - x^{\star} \|^2$, is bounded by a constant, hence it is $\mathcal{O}(1)$.

For the term related to the optimality gap of individual agents and given $M=\lfloor T/2 \rfloor$
\begin{equation}\label{eq2t2}
	\begin{aligned}[b]
		&\sum_{k=M}^T \frac{\gamma c_0 \alpha_0}{c_k^2} \sum_{i=1}^n   (f_i(x^{\star})-\bar{f}_i^k)
		\\&\leq 	\sum_{k=M}^T\frac{\gamma c_0 \alpha_0}{k+1} \left( \sum_{i=1}^n   {f_i^{\star}}- \sum_{i=1}^n  \bar{f_i^0}\right) = \mathcal{O}(1),
	\end{aligned}
\end{equation}
next, we analyze the exponential decay term from $\tau_k$
\begin{equation}\label{eq3t2}
	\begin{aligned}[b]
		&\sum_{k=M}^T	\alpha_{\text{min}}(2 -\frac{\gamma}{c_k})\beta^{k-1}\\
		&\leq	\sum_{k=M}^T \frac{c_0\alpha_0}{2\sqrt{k+1}}(2 -\frac{\gamma}{c_k})\beta^{k-1}\\
		&\leq \sum_{k=M}^T \left(\frac{c_0\alpha_0}{\sqrt{k+1}}-\frac{\gamma c_0\alpha_0}{2(k+1)}\right)\beta^{k-1}=\mathcal{O}\left(\frac{1}{\sqrt{T}}\right).
	\end{aligned}
\end{equation}
The \eqref{eq3t2} rapid decay is due to the geometric progression factor $\beta^{k-1}$. We assume $M=\lfloor T/2 \rfloor$ for this analysis. For the last and most complex term within $\tau_k$, which involves a nested sum
\begin{align*}
	& \sum_{k=M}^{T}\alpha_{\text{min}}(2 -\frac{\gamma}{c_k})\sum_{t=0}^{k-2}\beta^{k-t}\frac{\gamma c_0 \alpha_0}{c_t}\\
	&\leq\sum_{k=M}^{T} \frac{c_0 \alpha_0}{\sqrt{k+1}}\sum_{t=0}^{k-2}\beta^{k-t}\frac{\gamma c_0 \alpha_0}{\sqrt{t+1}},
\end{align*}
we first consider the inner factor $\sum_{t=0}^{k-2}\beta^{k-t}\frac{1}{\sqrt{t+1}}$. Let $p=k-2$, and we split this sum into two parts
\begin{align*}
	&\sum_{t=0}^{k-2}\beta^{k-t}\frac{1}{\sqrt{t+1}}\\
	&=\beta^{2}\sum_{t=0}^{k-2}\beta^{k-2-t}\frac{1}{\sqrt{t+1}}\\
	&=\beta^{2}\sum_{t=0}^{p}\beta^{p-t} \frac{1}{\sqrt{t+1}}\\
	&=\beta^{2}\left[\sum_{t=0}^{\lfloor p/2\rfloor}\beta^{p-t}\frac{1}{\sqrt{t+1}}+\sum_{t=\lfloor p/2\rfloor+1}^{p}\beta^{p-t}\frac{1}{\sqrt{t+1}}\right],
\end{align*}
for the first term, $\sum_{t=0}^{\lfloor p/2\rfloor}\beta^{p-t}\frac{1}{\sqrt{t+1}}$, we have
\[
\sum_{t=0}^{\lfloor p/2\rfloor}\beta^{p-t}\frac{1}{\sqrt{t+1}} \leq\beta^{p-\lfloor p/2\rfloor}\sum_{t=0}^{\lfloor p/2\rfloor} \frac{1}{\sqrt{t+1}}
\]
using an integral approximation for the sum $\sum_{t=0}^{\lfloor p/2\rfloor} \frac{1}{\sqrt{t+1}}$:
\begin{align*}
	&\sum_{t=0}^{\lfloor p/2\rfloor}\beta^{p-t}\frac{1}{\sqrt{t+1}}\\
	&\leq\beta^{p/2} \int_{0}^{\lfloor p/2\rfloor}\frac{1}{\sqrt{t+1}}dt = \beta^{p/2} \left[2\sqrt{t+1}\right]_{0}^{\lfloor p/2\rfloor} \\
	&= 2\beta^{p/2}(\sqrt{\lfloor p/2\rfloor+1}-1),
\end{align*}
it is easy to verify that for any $1>\beta_{1}>\sqrt{\beta}$
\[
\lim_{p\rightarrow\infty}\frac{\beta^{p/2}\left(\sqrt{\lfloor p/2\rfloor+1}-1 \right)}{\beta_{1}^{p}}=0.
\]
we thus have that $\sum_{t=0}^{\lfloor p/2\rfloor}\beta^{p-t}\frac{1}{\sqrt{t+1}}\leq 2C_{\beta_{1}}\beta_{1}^{p}$, where $C_{\beta_{1}}$ is a constant.
next, we consider the term $\sum_{t=\lfloor p/2\rfloor+1}^{p}\beta^{p-t}\frac{1}{\sqrt{t+1}}$. We bound it as
\[
\sum_{t=\lfloor p/2\rfloor+1}^{p}\beta^{p-t}\frac{1}{\sqrt{t+1}} \leq\frac{1}{\sqrt{\lfloor p/2\rfloor+2}}\sum_{t=\lfloor p/2\rfloor+1}^{p} \beta^{p-t}
\]
\[
=\frac{1}{\sqrt{\lfloor p/2\rfloor+2}}\beta^{2\lfloor p/2\rfloor -p+2}\frac{1-\beta^{p-\lfloor p/2\rfloor}}{1-\beta}.
\]
as $1\leq 2\lfloor p/2\rfloor-p+2\leq 2$ and $\frac{1-\beta^{p-\lfloor p/2\rfloor}}{1-\beta}\leq\frac{1}{1-\beta}$, there exists a constant $C_{\beta}$ such that 
\[
\sum_{t=\lfloor p/2\rfloor+1}^{p}\beta^{p-t}\frac{1}{\sqrt{t+1}}\leq C_{\beta }\frac{1}{\sqrt{\lfloor p/2\rfloor+2}}.
\]
noticing $p=k-2$, we substitute these bounds back into the outer sum
\begin{equation}\label{eq4t2}
	\begin{aligned}
		&\sum_{k=M}^{T}\alpha_{\text{min}}(2 -\frac{\gamma}{c_k})\sum_{t=0}^{k-2}\beta^{k-t}\frac{\gamma c_0 \alpha_0}{c_t} \\
		&\leq 2C_{\beta_{1}}\sum_{k=M}^{T}\frac{\gamma c_0^2 \alpha_0^2}{2\sqrt{k+1}}\beta_{1}^{(k-2)}+C_{\beta}\sum_{k=M}^{T}\frac{\sqrt{2}\gamma c_0^2 \alpha_0^2}{2\sqrt{(k+1)(k+3)}} \\
		&\leq \gamma c_0^2 \alpha_0^2(C_{\beta_{1}}\sum_{k=M}^{T}\frac{1}{\sqrt{k+1}}\beta_{1}^{(k-2)}+\frac{\sqrt{2}}{2}C_{\beta}\sum_{k=M}^{T}\frac{1}{k+1})
	\end{aligned}
\end{equation}
the first term in this sum is $\mathcal{O}\left(\frac{1}{\sqrt{T}}\right)$ due to the exponential decay $\beta_1^{(k-2)}$. The second term, $\sum_{k=M}^{T}\frac{1}{k+1}$, is $\mathcal{O}(1)$ when $M=\lfloor T/2 \rfloor$.
Thus, this entire nested sum term is $\mathcal{O}(1)$.

Combining the asymptotic bounds for the numerator terms from \eqref{eq1t2}, \eqref{eq2t2}, \eqref{eq3t2}, and \eqref{eq4t2}, 	we observe that the dominant terms are of order $\mathcal{O}(1)$.
Thus, the convergence rate satisfies:
\[
\min\limits_{M\leq k\leq T}f(\bar{x}_k)-f(x^{\star})\leq \mathcal{O}\left(\frac{1}{\sqrt{nT}}\right).
\]
this result shows that the algorithm converges to the optimal solution at a sublinear rate, with the optimality gap shrinking as $T$ increases. 
\end{IEEEproof}

\begin{remark}
	 The Theorem \ref{theo1} confirms the overall convergence properties of the algorithm, demonstrating that the selection of parameters does not affect the final convergence outcome. Finally, Theorem  \ref{theo2} shows that for specific values of the sequence \( c_k \), the algorithm achieves a convergence rate of \( \mathcal{O}(1/\sqrt{nT}) \).
\end{remark}
\section{Numerical Experiments}\label{sec5}

This section evaluates the Algorithm \ref{algorithm1} through numerical simulations in a distributed optimization environment. We consider a multi-agent system consisting of $N=4$ agents collaborating to solve a common optimization problem over a decision variable $\theta \in \mathbb{R}^d$ with dimension $d=6$. Each agent $i$ has a local objective function defined by a quadratic loss,
\[
f_i(\theta) = \frac{1}{2}\lVert A_i\theta - b_i \rVert^2,
\]
where elements of the data matrices $A_i \in \mathbb{R}^{2 \times 6}$ and vectors $b_i \in \mathbb{R}^2$are randomly sampled from uniform distributions $U(0, 0.1)$ and $U(0, 5)$, respectively. A key feature of our setup is the definition of a non-trivial, common constraint set for all agents. All agents are subject to the same box constraint, i.e., $\theta \in \mathcal{X} = [\mathbf{l}, \mathbf{u}]$. These bounds are systematically generated based on the unconstrained global minimizer, $\theta_{\text{unc}}^{\star}$. Specifically, for each dimension $j \in \{1, \dots, 6\}$, the common lower and upper bounds are defined as
$
	l_{j} = \theta_{\text{unc},j}^{\star} + 10 + 10 \sin\left(\frac{j\pi}{120}\right), 
	u_{j} = \theta_{\text{unc},j}^{\star} + 20 + 10 \sin\left(\frac{j\pi}{120}\right).
$
This formulation creates a single, shared feasible set $\mathcal{X}$ within which the optimal solution must be found. The agents communicate over a connected, undirected random graph. The corresponding doubly stochastic weight matrix $W$ is constructed using the Metropolis-Hastings rule to govern the consensus updates. The network structure considered in the experiments is shown in Fig.~\ref{graph}. For Algorithm \ref{algorithm1}, the parameters for each agent are configured as $\gamma = 1.0$ and $\bar{\gamma} = 1.5$, with the stepsize rule incorporating a scaling factor of $c_k = 0.5\sqrt{k+1}$. The initial level-value is uniformly set to $-500$ across all agents.  We compare its performance against the DGD, which employs a conventional diminishing stepsize rule of $\alpha_k = \frac{2}{k+1}$.
% The ground-truth global optimal solution $\theta^{\star}$ is obtained by solving the centralized constrained optimization problem $\min_{\theta \in \mathcal{X}} \sum_{i=1}^N f_i(\theta)$.
%This solution serves as the benchmark for performance evaluation. 
The primary metric used to assess the algorithms is the function value error, defined as $
\text{Resdual errors}(k) = f(\bar{\theta}_k) - f(\theta^{\star}),
$
where $f(\cdot) = \sum_{i=1}^N f_i(\cdot)$ and $\bar{\theta}_k = \frac{1}{N}\sum_{i=1}^N \theta_{i,k}$ . Additional analyses on consensus error, level-value evolution, and stepsize dynamics are also performed to provide a comprehensive evaluation.
\begin{figure}[t]
	\centering
	\subfloat{
		\includegraphics[width=0.5\linewidth]{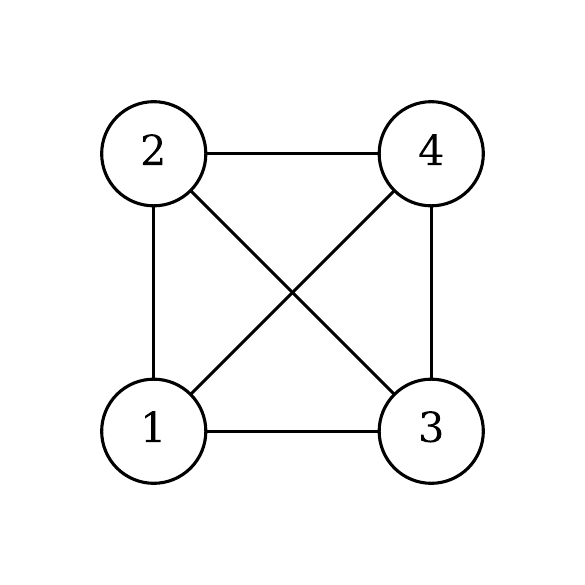} % 比 0.5 更宽就不容易换行
	}
	\vspace*{-2em}
	\caption{Network graph of numerical simulation.}
	\label{graph}
\end{figure}

From Fig.~\ref{fig:performance_comparison}(a), we observe that the proposed algorithm demonstrates a substantially superior convergence rate compared to the DGD algorithm. Specifically, the function error of the proposed algorithm decreases precipitously within the initial 50 iterations, rapidly reaching a near-zero steady-state value. In stark contrast, the DGD algorithm exhibits a much slower, gradual reduction in function error, failing to achieve a comparable level of accuracy even after 300 iterations. 
\begin{figure}[t]
	\centering
	
	% 第一张图 - 单独一行
	\subfloat{%
		\includegraphics[width=0.8\linewidth]{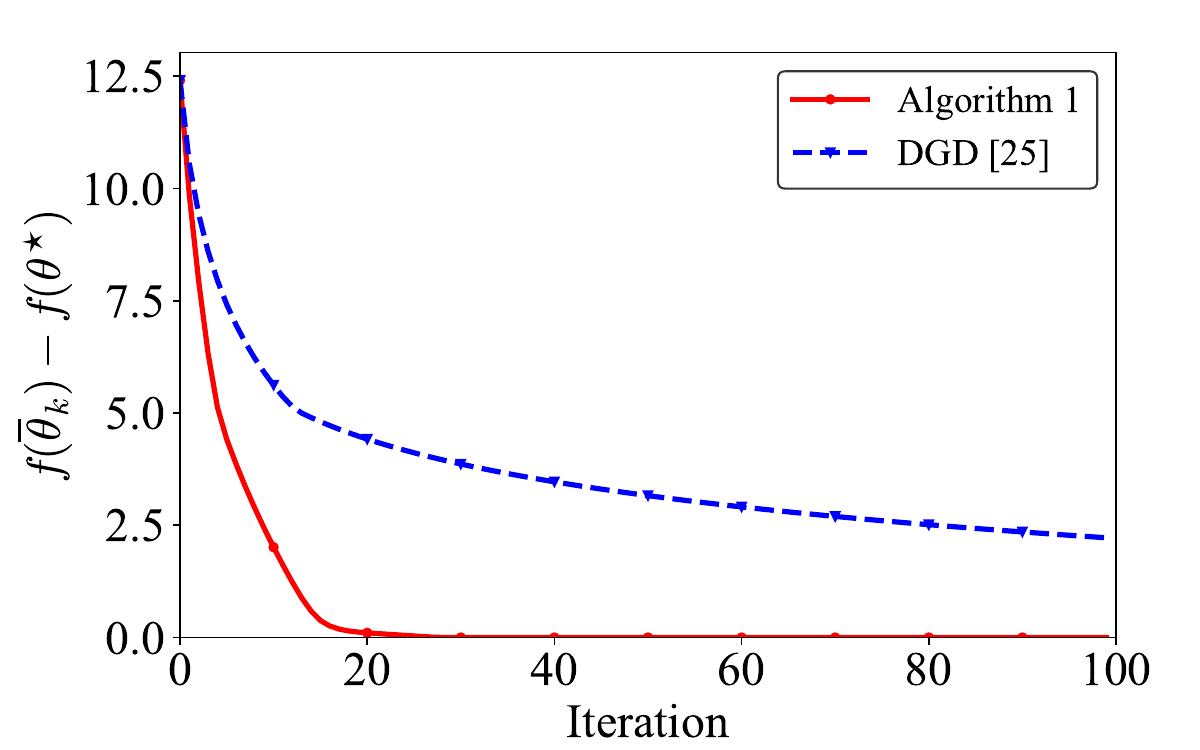}} \\
	
	% 第二张图 - 单独一行

	\caption{Residual errors comparison of the proposed algorithm and DGD algorithm.}
	\label{fig:performance_comparison}
\end{figure}
As illustrated in Fig. \ref{fig:performance_comparison2}, the estimated level-values $\bar{f}_i^k$ converge rapidly and accurately to the true optimal values $f_i(\theta^{\star})$. Furthermore, Fig.\ref{fig:performance_comparison2}(b) confirms that consensus among agents is also achieved rapidly, contributing to the overall stability and efficiency of the proposed algorithm. This precise estimation enables the algorithm to adaptively compute a more aggressive and near-optimal stepsize, as shown in Fig. \ref{fig:liner speedup}(a), which significantly accelerates the reduction of the optimality gap. Finally, the results in Fig. \ref{fig:liner speedup}(b) demonstrate that increasing the number of agents improves the convergence rate, thereby validating the theoretical linear speedup property.

\begin{figure}[t]
	\centering
		\subfloat[Level-values adjustment]{%
		\includegraphics[width=0.8\linewidth]{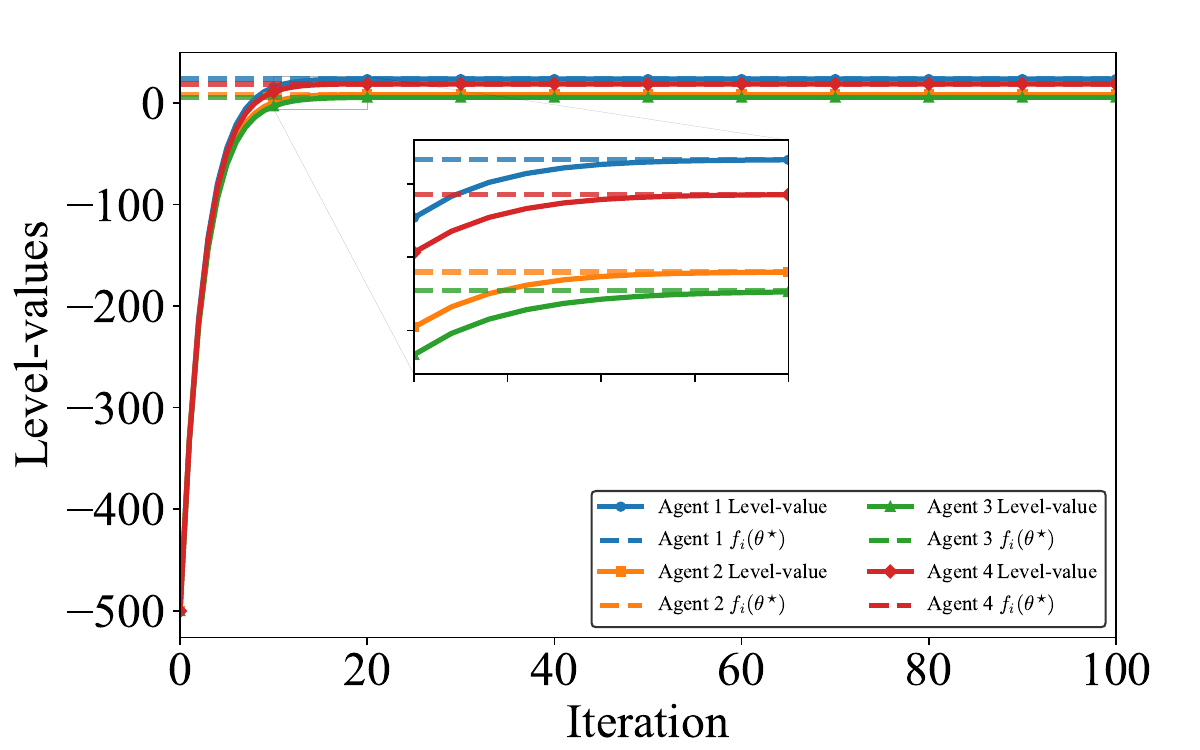}} \\
		
		\subfloat[Consensus errors]{%
		\includegraphics[width=0.8\linewidth]{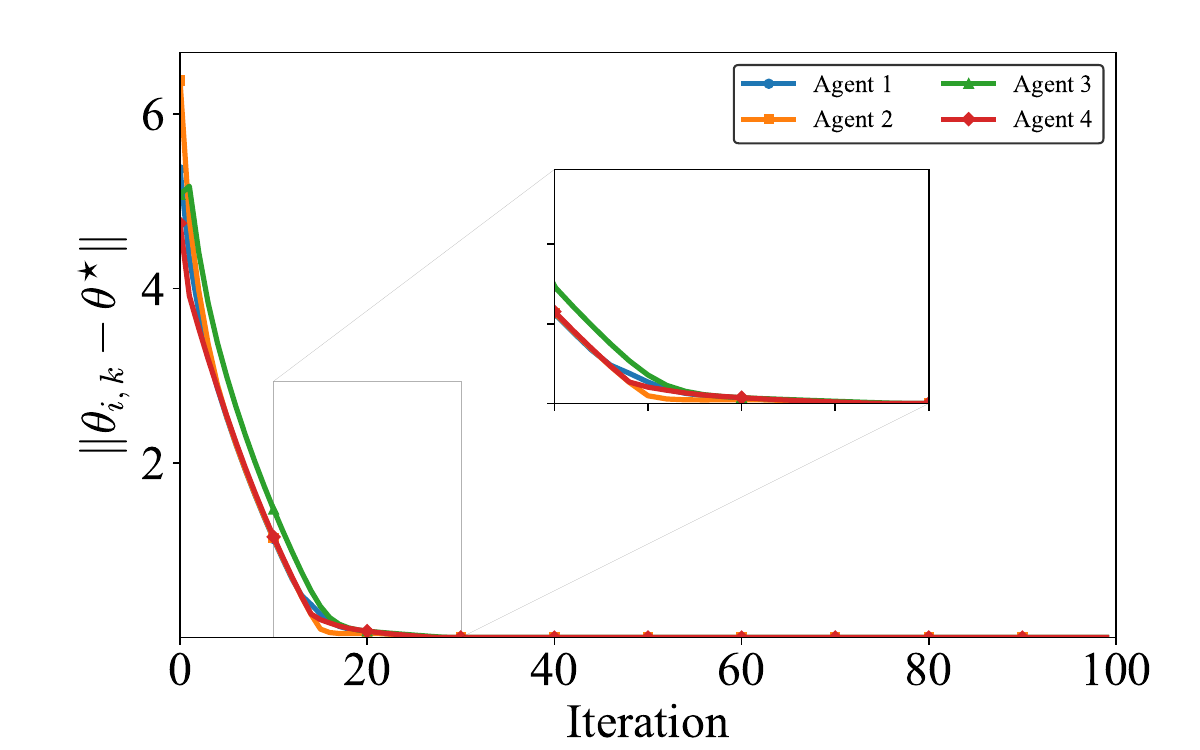}} \\
	\caption{Evolution of level-values and consensus error for each agent.}
	\label{fig:performance_comparison2}
\end{figure}
% TODO: \usepackage{graphicx} required
%\begin{figure}
%	\centering
%	\includegraphics[width=1.05\linewidth]{../1/ex}
%	\vspace{-1em}
%	\caption{Performance comparison of the proposed algorithm and DGD algorithm.}
%	\label{fig:performance_comparison}
%\end{figure}

%\begin{figure}[t]
%	\centering
%	
%	% 第一张图 - 单独一行
%	\subfloat[Residual errors]{%
%		\includegraphics[width=0.7\linewidth]{function_error.pdf}} \\
%	
%	% 第二张图 - 单独一行
%	\subfloat[Consensus errors]{%
%		\includegraphics[width=0.7\linewidth]{distance_to_optimal.pdf}} \\
%	
%	% 第三张图 - 单独一行
%	\subfloat[Level-values adjustment]{%
%		\includegraphics[width=0.7\linewidth]{level_value_evolution.pdf}} \\
%	
%	% 第四张图 - 单独一行
%	\subfloat[Agents stepsizes]{%
%		\includegraphics[width=0.7\linewidth]{stepsize_evolution.pdf}}
%	
%	\caption{Performance comparison of the proposed algorithm and DGD algorithm.}
%	\label{fig:performance_comparison}
%\end{figure}
\begin{figure}[t]
	% \raggedright 去掉，避免和 \centering 打架
	\centering
	\subfloat[Adaptive stepsizes of each agent]{%
	\includegraphics[width=0.8\linewidth]{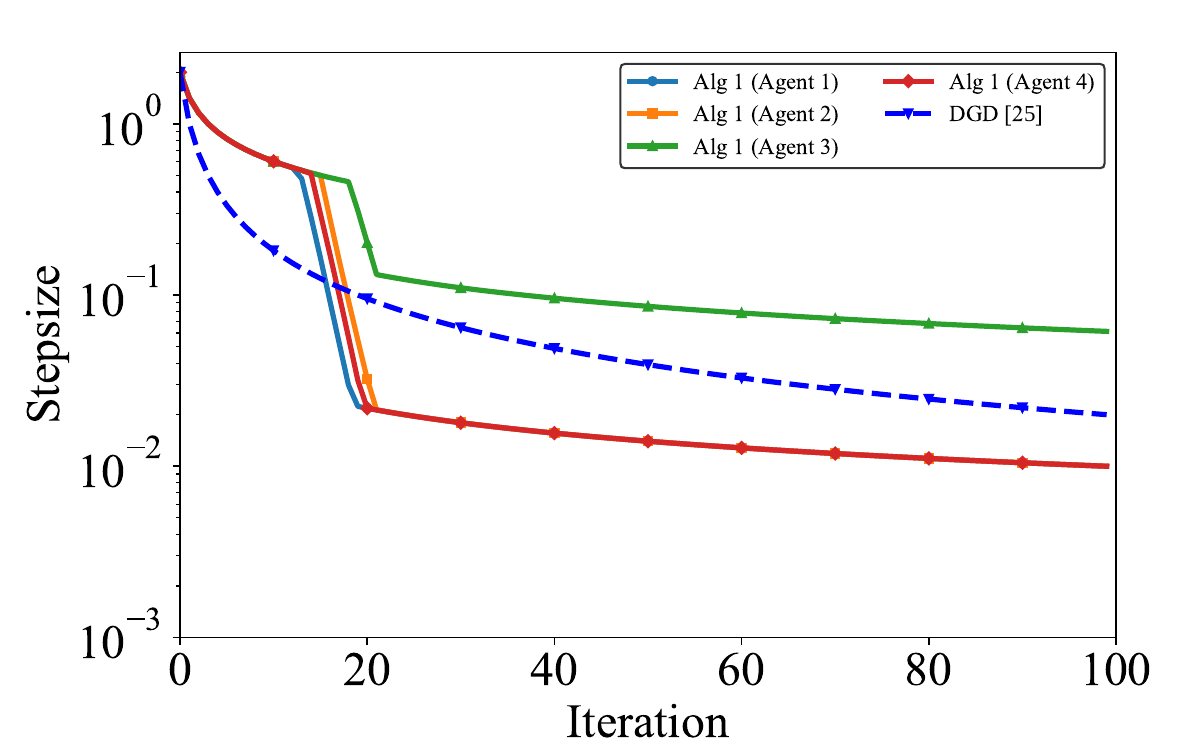}}
	
\subfloat[Different number of agents]{
	\includegraphics[width=0.8\linewidth]{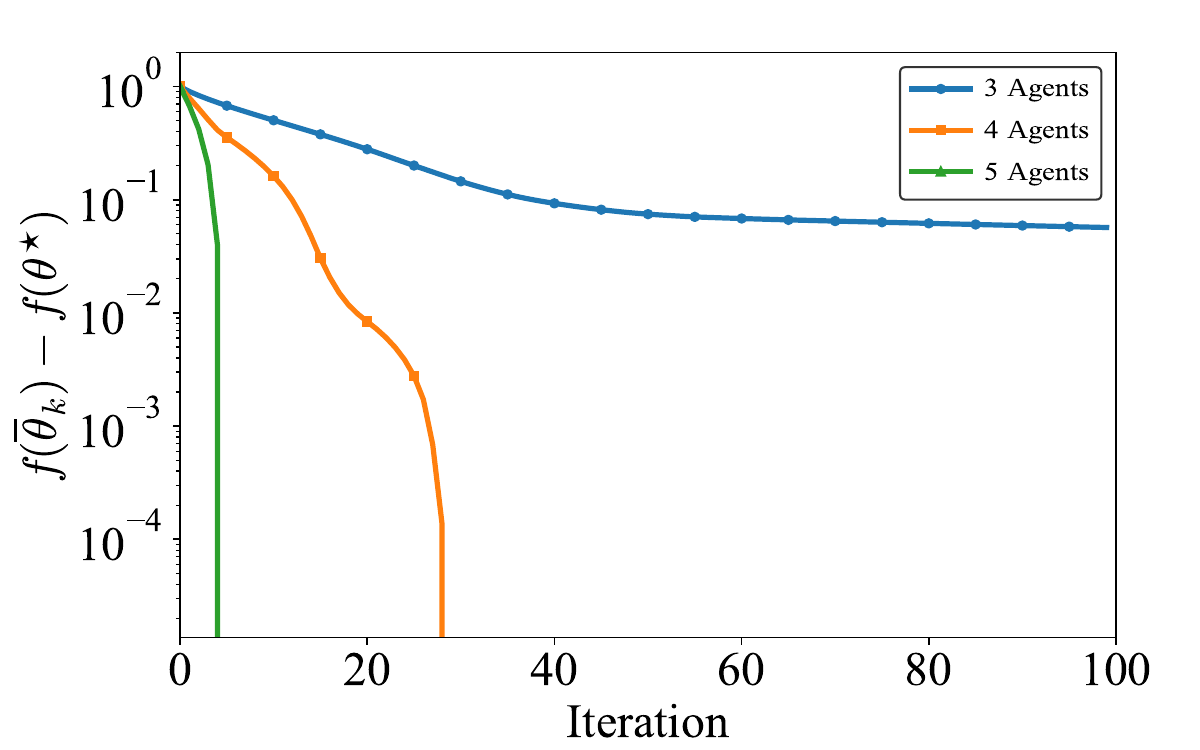}
}
\caption{Adaptive stepsizes of each agent and performance comparison with different numbers of agents.}
\label{fig:liner speedup}
\end{figure}

\section{Conclusion}\label{sec6}
In this work, we have investigated the distributed constrained optimization problem. A distributed gradient algorithm, incorporating a Polyak stepsize with a level-value adjustment mechanism, has been proposed. This design bypasses the conventional requirement for prior knowledge of the global optimal value. We have rigorously shown that the proposed algorithm achieves a convergence rate of $\mathcal{O}({\frac{1}{\sqrt{nT}}})$ in terms of the objective optimality gap. Both theoretical analysis and numerical experiments establish the efficacy. Future works can focus on integrating the proposed algorithm with acceleration techniques, such as gradient tracking and EXTRA, to further enhance convergence performance in diverse network environments.

\appendix

%\begin{lemma}
%	Let $\{c_k\}_{k \ge 0}$ be a non-decreasing sequence of positive real numbers. Then, the step-size $\alpha_{i,k}$ is bounded and decays as the iteration $k$ increases. Moreover, the sequence $\{\alpha_{i,k}\}_{k \ge 0}$ is non-increasing, i.e., $\alpha_{i,k} \leq \alpha_{i,k-1}$ for all $i \in V$ and $k$.
%\end{lemma}

	\subsection{Proof of Lemma \ref{bound}}\label{L2proof}
\begin{IEEEproof}
	Firstly, since $\alpha_{i,k} \leq \frac{c_{k-1}}{c_k}\alpha_{i,k-1}$ and $c_k$ is non-decreasing, we have that $\alpha_{i,k}$ is non-increasing.
	
	For $k=0$:
	\begin{equation*}
		\alpha_{i,0} =  \frac{1}{c_0}\min\left\{ \max\left\{ \frac{c_0\alpha_0}{2}, \beta_{i,0} \right\}, c_0\alpha_0 \right\},
	\end{equation*}
	where $\beta_{i,0} = \gamma \frac{f_i(z_{i,0}) - \bar{f}^0_i}{\|\nabla f_i(z_{i,0})\|^2}$. 
	
	If $\beta_{i,0} \leq \frac{c_0\alpha_0}{2}$, then $\max\left\{ \frac{c_0\alpha_0}{2}, \beta_{i,0} \right\} = \frac{c_0\alpha_0}{2}$, and thus $\alpha_{i,0} = \frac{1}{c_0}  \min\left\{ \frac{c_0\alpha_0}{2}, c_0\alpha_0 \right\} = \frac{\alpha_0}{2}$. If $\beta_{i,0} > \frac{c_0\alpha_0}{2}$, then $\max\left\{ \frac{c_0\alpha_0}{2}, \beta_{i,0} \right\} = \beta_{i,0}$, and thus $\alpha_{i,0} = \frac{1}{c_0} \cdot \min\left\{ \beta_{i,0}, c_0\alpha_0 \right\}$. If in addition $\beta_{i,0} \geq c_0\alpha_0$, then $\alpha_{i,0} = \alpha_0$; otherwise if $\beta_{i,0} < c_0\alpha_0$, then $\alpha_{i,0} = \frac{\beta_{i,0}}{c_0} \in \left( \frac{\alpha_0}{2}, \alpha_0 \right)$. 
	
	In all cases, we have $\alpha_{i,0} \in \left[ \frac{\alpha_0}{2}, \alpha_0 \right] = \left[ \frac{c_0\alpha_0}{2c_0}, \frac{c_0\alpha_0}{c_0} \right]$. Hence, the base case holds. Now assume by induction that for some $k$,
	\begin{equation*}
		\frac{c_0\alpha_0}{2c_k} \leq \alpha_{i,k} \leq \frac{c_0\alpha_0}{c_k},
	\end{equation*}
for $k+1$, we have
	\begin{equation*}
		\alpha_{i,k+1} = \frac{1}{c_{k+1}}  \min\left\{ \max\left\{ \frac{c_0\alpha_0}{2}, \beta_{i,k+1} \right\}, c_k\alpha_{i,k} \right\},
	\end{equation*}
	from the induction hypothesis, $c_k\alpha_{i,k} \in \left[ \frac{c_0\alpha_0}{2}, c_0\alpha_0 \right]$. Let $t = \frac{c_k\alpha_{i,k}}{c_0} \in [\alpha_0/2, \alpha_0]$.
	
	Consider first the case when $\beta_{i,k+1} \leq \frac{c_0\alpha_0}{2}$. Then $\max\left\{ \frac{c_0\alpha_0}{2}, \beta_{i,k+1} \right\} = \frac{c_0\alpha_0}{2}$, and 
	\begin{align*}
		\alpha_{i,k+1} &= \frac{1}{c_{k+1}}  \min\left\{ \frac{c_0\alpha_0}{2}, c_k\alpha_{i,k} \right\} \\
		&= \frac{1}{c_{k+1}}  \min\left\{ \frac{c_0\alpha_0}{2}, c_0t \right\} \\
		&= \frac{c_0}{c_{k+1}}  \min\left\{ \frac{\alpha_0}{2}, t \right\},
	\end{align*}
	since $t \geq \frac{\alpha_0}{2}$, we have $\min\left\{ \frac{\alpha_0}{2}, t \right\} = \frac{\alpha_0}{2}$, thus $\alpha_{i,k+1} = \frac{c_0\alpha_0}{2c_{k+1}}$. Now consider the case when $\beta_{i,k+1} > \frac{c_0\alpha_0}{2}$. Then $\max\left\{ \frac{c_0\alpha_0}{2}, \beta_{i,k+1} \right\} = \beta_{i,k+1}$, and 
	\begin{align*}
		\alpha_{i,k+1} &= \frac{1}{c_{k+1}} \min\left\{ \beta_{i,k+1}, c_k\alpha_{i,k} \right\} \\
		&= \frac{1}{c_{k+1}}  \min\left\{ \beta_{i,k+1}, c_0t \right\},
	\end{align*}
	if $\beta_{i,k+1} \geq c_0t$, then $\alpha_{i,k+1} = \frac{c_0t}{c_{k+1}} \in \left[ \frac{c_0\alpha_0}{2c_{k+1}}, \frac{c_0\alpha_0}{c_{k+1}} \right]$. 
	
	If $\beta_{i,k+1} < c_0t$, then $\alpha_{i,k+1} = \frac{\beta_{i,k+1}}{c_{k+1}}$. Since $\beta_{i,k+1} > \frac{c_0\alpha_0}{2}$, we have $\alpha_{i,k+1} > \frac{c_0\alpha_0}{2c_{k+1}}$, and since $\beta_{i,k+1} \leq c_0t \leq c_0\alpha_0$, we have $\alpha_{i,k+1} \leq \frac{c_0\alpha_0}{c_{k+1}}$. In all cases, we have shown that $\frac{c_0\alpha_0}{2c_{k+1}} \leq \alpha_{i,k+1} \leq \frac{c_0\alpha_0}{c_{k+1}}$.
\end{IEEEproof}

	\subsection{Proof of Lemma \ref{psvd}}\label{L3proof}
\begin{IEEEproof}
	Since the constraint system \eqref{12} admits no feasible solution, \( x^{\star} \in \mathcal{X}^{\star} \) is not feasible to \eqref{12}. then the optimal point $x^{\star} \in \mathcal{X}^{\star}$ is necessarily infeasible to \eqref{12}. Consequently, there must exist at least one index $k \in \{k_i(j), k_i(j)+1, \ldots, k_i(j)+\eta\}$ that violates the constraint, such that
  \begin{equation*}
	\left( \nabla f_i^k \right)^\text{T} (x^{\star} - z_{i,k}) > - \frac{1}{\bar{\gamma}} \beta_{i,k} \| \nabla f_i^k \|^2.
\end{equation*}
Rearranging the terms yields a lower bound on the $\beta_{i,k}$ as follows
    \begin{equation}\label{proof_step1}
	\beta_{i,k} > \frac{{\bar{\gamma}} \left( \nabla f_i^k \right)^\text{T} \left( z_{i,k} - x^{\star} \right)}{\| \nabla f_i^k \|^2}.
\end{equation}
By invoking the first-order condition of convexity, i.e., $f_i(x^\star) \geq f_i(z_{i,k}) + (\nabla f_i^k)^\text{T}(x^\star - z_{i,k})$, \eqref{proof_step1} can be simplified to
\begin{equation*}
	\beta_{i,k} > \bar{\gamma} \frac{f_i(z_{i,k}) - f_i^{\star}}{\| \nabla f_i^k\|^2}.
\end{equation*}
  Applying the $\beta_{i,k} = \gamma (f_i(z_{i,k}) - \bar{f}_i^k) / \|\nabla f_i^k\|^2$, we obtain
\begin{equation*}
	\gamma \frac{f_i(z_{i,k}) - \bar{f}_i^k}{\|\nabla f_i^k\|^2} > \bar{\gamma} \frac{f_i(z_{i,k}) - f_i^{\star}}{\|\nabla f_i^k\|^2}.
	\end{equation*}
		By canceling the common term $\|\nabla f_i^k\|^2$ and solving for $f_i^{\star}$, we arrive at:
canceling \(\|\nabla f_i^k\|^2\) and rearranging terms yields
    \begin{equation*}
	f_i^{\star} > \left(1 - \frac{\gamma}{\bar{\gamma}}\right) f_i(z_{i,k}) + \frac{\gamma}{\bar{\gamma}} \bar{f}_i^k.
\end{equation*}
 The right-hand side of the above inequality precisely defines the updated level-value $\bar{f}_i^{k+1}$ as
\begin{equation}\label{14}
	\bar{f}_i^{k+1} = \frac{\gamma}{\bar{\gamma}} \bar{f}_i^k + \left(1 - \frac{\gamma}{\bar{\gamma}}\right) f_i(z_{i,k}).
\end{equation}
Since $\bar{\gamma} > \gamma > 0$, the update in \eqref{14} represents a convex combination of $\bar{f}_i^k$ and $f_i(z_{i,k})$. Given that $\bar{f}_i^k < f_i(z_{i,k})$ holds by construction, the sequence of level-values satisfies the strict monotonicity and bounding property
\begin{equation*}
	\bar{f}_i^k < \bar{f}_i^{k+1} < f_i^{\star},
\end{equation*}
which implies that $\bar{f}_i^{k+1}$ serves as a tighter lower bound for the optimal value $f_i^{\star}$.
\end{IEEEproof}
	\subsection{Proof of Lemma \ref{lemma 7}}\label{L4proof}
\begin{IEEEproof}
We establish the bound by analyzing the evolution of the squared distance for a single agent and then aggregating the results across the network.
 
By the non-expansiveness property of the projection operator $P_{\mathcal{X}}$, it follows that $\| P_{\mathcal{X}}(u) - P_{\mathcal{X}}(v) \| \leq \| u - v \|$. Setting $v = x^{\star} = P_{\mathcal{X}}(x^{\star})$, we have
	\begin{align}\label{x_bound_refined}
		&\| x_{i,k+1} - x^{\star} \|^2\notag\\
		& \leq \| (z_{i,k} - \alpha_{i,k} \nabla f_{i}^k) - x^{\star} \|^2\notag\\
		&= \| z_{i,k} - x^{\star} \|^2 - 2\alpha_{i,k} \langle \nabla f_i^k, z_{i,k} - x^{\star} \rangle + \alpha_{i,k}^2 \| \nabla f_i^k \|^2.
	\end{align}
Averaging \eqref{x_bound_refined} over all $n$ agents and utilizing the consensus property $\frac{1}{n} \sum_{i=1}^n \| z_{i,k} - x^{\star} \|^2 \leq \frac{1}{n} \sum_{i=1}^n \| x_{i,k} - x^{\star} \|^2$ (which holds under Assumption \ref{a1}), we obtain
\begin{equation*}
	\begin{aligned}
		& \quad \| x_{i,k+1} - x^{\star} \|^2 \\
		&  \quad \leq \| z_{i,k} - x^{\star} \|^2 - 2\alpha_{i,k} (f(z_{i,k})-f_i(x^{\star})) + \alpha_{i,k}^2 \| \nabla f_i^k \|^2\\
		&\quad \leq \| z_{i,k} - x^{\star} \|^2 - 2\alpha_{i,k} (f(z_{i,k})-f_i(x^{\star})) \\
		& \quad \quad+ \frac{\gamma}{c_k} \alpha_{i,k}( f(z_{i,k})-\bar{f_i}^k)\\
		& \quad= \| z_{i,k} - x^{\star} \|^2 - 2\alpha_{i,k} (f(z_{i,k})-f_i(x^{\star})) \\
		&\quad \quad+ \frac{\gamma}{c_k} \alpha_{i,k} (f(z_{i,k})-f_i(x^{\star}) + f_i(x^{\star})-\bar{f_i}^k) \\
		&\quad = \| z_{i,k} - x^{\star} \|^2 - (2 -\frac{\gamma}{c_k})\alpha_{i,k}(f(z_{i,k})-f_i(x^{\star})) \\
		&\quad \quad+ \frac{\gamma}{c_k} \alpha_{i,k} (f_i(x^{\star})-\bar{f}_i^k)).
	\end{aligned}
\end{equation*}
To introduce the average state $\bar{x}_k$, we decompose the function gap term as $f_i(z_{i,k}) - f_i(x^{\star}) = [f_i(z_{i,k}) - f_i(\bar{x}_k)] + [f_i(\bar{x}_k) - f_i(x^{\star})]$. 
Since $\mathcal{X}$ is compact, $f_i$ is $G$-Lipschitz continuous, implying $f_i(\bar{x}_k) - f_i(z_{i,k}) \leq G \| z_{i,k} - \bar{x}_k \|$. Substituting this into the average bound yields
\begin{align*}
	&\frac{1}{n} \sum_{i=1}^n \| x_{i,k+1} - x^{\star} \|^2 - \frac{1}{n} \sum_{i=1}^n \| x_{i,k} - x^{\star} \|^2 \\
	&\leq G \alpha_{\text{min}} \left(2 - \frac{\gamma}{c_k}\right) \frac{1}{n} \sum_{i=1}^n \| z_{i,k} - \bar{x}_k \| \\
	&\quad - \alpha_{\text{min}} \left(2 - \frac{\gamma}{c_k}\right) (f(\bar{x}_k) - f(x^{\star})) \\
	&\quad + \frac{\gamma c_0 \alpha_0}{c_k^2} \frac{1}{n} \sum_{i=1}^n (f_i(x^{\star}) - \bar{f}_i^k).
\end{align*}
This completes the proof.
\end{IEEEproof}

%\begin{lemma}\label{lemma8}
%Suppose Assumptions 1-3 hold, and with the parameter $c_k=\sqrt{k+1}$, it can be shown that there exists a subsequence $\{y(k_\nu)\}$ of the iterates $\{\bar{x}_k\}$ such that $\lim_{\nu \to \infty} f(y(k_\nu)) - f(x^{\star}) = 0$. For convenience in the subsequent analysis, we define $\sigma = f(x^{\star}) - \bar{f}^0$, where $f(x^{\star})$ is the global optimal value and $\bar{f}^0$ is the initial common level-value for all agents.
%\end{lemma}
	\subsection{Proof of Lemma \ref{lemma8}}\label{L5proof}
	\begin{IEEEproof}
 we first prove by contradiction that  
\[
\liminf_{k \to \infty} f(\bar{x}_k) - f(x^{\star}) \leq 0.
\]
Suppose there exist $\epsilon > 0$ and $K_\epsilon$ such that $f(\bar{x}_k) - f(x^{\star}) \geq \epsilon$ for all $k \geq K_\epsilon$.
\[
f(\bar{x}_k) - f(x^{\star}) \geq \epsilon,
\]
from Lemma \ref{lemma 7}, we have
\begin{align}
	&\frac 1n \sum_{i=1}^n \| x_{i,k+1} - x^{\star} \|^2  
-  \frac 1n \sum_{i=1}^n \| x_{i,k} - x^{\star} \|^2 \notag \\
&\leq  G\alpha_{\text{min}}(2 -\frac{\gamma}{c_k})(\frac 1n \sum_{i=1}^n  \|z_{i,k}-\bar{x}_k\|-\epsilon)\notag \\
	&\quad + \frac{\gamma c_0 \alpha_0}{c_k^2} \frac 1n \sum_{i=1}^n   (f_i(x^{\star})-\bar{f_i^k})),\notag
\end{align}	
when the $c_k=\sqrt{k+1}$, which implies decaying stepsizes for \(\alpha_{i,k}\), there exists a constant \(K'_1 \in \mathbb{N}^+\) such that for \(k > K'_1\), the \(c_k\) satisfies
\begin{equation}\label{K1}
\frac{1}{c_k}\leq \frac{\epsilon}{G \sqrt{\sigma \gamma c_0 \alpha_0}}.
\end{equation}
where \(G\) is the Lipschitz constant of \(f\) and \(\alpha_{i,k}\) is bounded by \(\frac{c_0 \alpha_0}{c_k}\).
Under Assumption 1, there exists a constant  $K'_3$ for \(k > K'_3\), by the consensus result, we get
\begin{equation}\label{K3}
\frac{1}{n} \sum_{i=1}^{n} \|{\bar{x}_k - z_{i,k}}\| \leq \frac{\epsilon}{4G}.
\end{equation}
Combing \eqref{K1} and \eqref{K3}, when \(k > \max\{K_{1}^{\prime}, K_{3}^{\prime}\}\), 
\begin{equation}\label{x bound3}
	\begin{aligned}[b]
		& \frac{1}{n}\sum_{i=1}^{n} \|x_{i,k+1} - x^{*}\|^{2} - \frac 1n \sum_{i=1}^n \| x_{i,k} - x^{\star} \|^2 \\ 
		& \leq G\alpha_{\text{min}}(2 -\frac{\gamma}{c_k})\frac 1n \sum_{i=1}^n  \|z_{i,k}-\bar{x}_k\| \\
		& \quad - G\alpha_{\text{min}}(2 -\frac{\gamma}{c_k})(f(\bar{x}_k)-f(x^{\star})) \\
		& \quad + \frac{\gamma c_0 \alpha_0}{c_k^2} \frac 1n \sum_{i=1}^n   (f_i(x^{\star})-\bar{f}_i^k) \\
		& \leq \alpha_{\text{min}}(2 -\frac{\gamma}{c_k}) \frac{\epsilon}{4}- \alpha_{\text{min}}(2 -\frac{\gamma}{c_k}) \epsilon \\
		& \quad + \frac{\gamma c_0 \alpha_0}{c_k^2} \frac 1n \sum_{i=1}^n   (f_i(x^{\star})-\bar{f}_i^k)
	\end{aligned}
\end{equation}
building upon Lemma \ref{psvd}, we establish 
$
\frac{1}{n}\sum_{i=1}^{n}(f_i(x^{\star})-\bar{f}_i^k) \leq f(x^{\star})-\bar{f}^0=\sigma,
$
%There exists a sufficiently large integer $K_5^{'} \in \mathbb{N}^+$ such that for all $k > K_5^{'}$, the avrage optimality gap satisfies:
%\[
%\frac{1}{n}\sum_{i=1}^{n}(f_i(x^{\star})-\bar{f}_i^k)-\sigma \leq \epsilon
%\]
%here, $K_0 = \max\{K_1^{'}, K_2^{'}, K_3^{'}\}$ encapsulates the conditions required for this bound to hold. 
under this condition, the evolution of the average squared distance of the iterates $x_{i,k}$ to an optimal solution $x^{\star}$ can be bounded, we have
\begin{align}\label{epsilon}
	&\frac{1}{n}\sum_{i=1}^{n}\|x_{i,k+1} - x^{*}\|^{2} - \frac{1}{n}\sum_{i=1}^{n}\|x_{i,k}- x^{*}\|^{2}\notag\\
	& \leq \sigma \left( \frac{\gamma c_0 \alpha_0}{c_k^2} - G\alpha_{\text{min}}\left(2 -\frac{\gamma}{c_k}\right) \frac{3\epsilon}{4\sigma} \right)\notag,
\end{align}
combining \eqref{x bound3}, there exist $\max\{K_{1}^{\prime}, K_{3}^{\prime}, K_\epsilon\} \in \mathbb{N}^+$, such that for all $k > \max\{K_{1}^{\prime}, K_{3}^{\prime}, K_\epsilon\}$,
\begin{align*}
\frac 1n \sum_{i=1}^n \| x_{i,k+1} - x^{\star} \|^2  
	\leq  \frac 1n \sum_{i=1}^n \| x_{i,k} - x^{\star} \|^2  - \mathcal{O}(\frac{1}{c_k})\epsilon,
\end{align*}	
given that $c_k = \sqrt{k+1}$, this implies that there exists a constant $C > 0$ such that for all sufficiently large $k$:
\begin{equation*} 
	\frac{1}{n}\sum_{i=1}^{n} \|x_{i,k+1} - x^{\star}\|^2 \leq \frac{1}{n}\sum_{i=1}^{n} \|x_{i,k} - x^{\star}\|^2 - \frac{C \epsilon}{\sqrt{k+1}}.
\end{equation*}
Summing this inequality from $k=K_0$ to $K_0+m-1$ yields
\begin{align}\label{eq:main_inequality}
&\frac{1}{n}\sum_{i=1}^{n}  \|x_{i,K_0+m} - x^{\star}\|^2 - \frac{1}{n}\sum_{i=1}^{n} \|x_{i,K_0} - x^{\star}\|^2 \notag
\\
&\leq - C \epsilon \sum_{k=K_0}^{K_0+m-1} \frac{1}{\sqrt{k+1}},
\end{align}	
the series $\sum_{k=1}^\infty \frac{1}{\sqrt{k+1}}$ is a $p$-series with $p=1/2$, which is known to be divergent. Therefore, as $m \to \infty$, the partial sum also diverges to infinity
\[
\lim_{m \to \infty} \sum_{k=K_0}^{K_0+m-1} \frac{1}{\sqrt{k+1}} = \infty,
\]
this means we can choose a sufficiently large $m$ such that the negative term on the right-hand side of \eqref{eq:main_inequality} dominates the constant positive term
\[
C \delta \sum_{k=K_0}^{K_0+m-1} \frac{1}{\sqrt{k+1}} > \frac{1}{n}\sum_{i=1}^{n} \|x_{i,K_0} - x^{\star}\|^2,
\]
for such a large $m$, it follows that
\[
\frac{1}{n}\sum_{i=1}^{n} \|x_{i,K_0+m} - x^{\star}\|^2 < 0.
\]

This is a contradiction, since the term $\frac{1}{n}\sum_{i=1}^{n} \|x_{i,K_0+m} - x^{\star}\|^2$ is an average of squared norms and must be non-negative. So $\liminf_{k \to \infty} f(\bar{x}_k) - f(x^{\star}) \leq 0$. As a result, $\liminf_{k \to \infty} f(\bar{x}_k) - f(x^{\star}) = 0$. So there exists a subsequence $\{y(k_\nu)\}$ of $\{\bar{x}_k\}$ such that $\lim_{k_\nu \to \infty} f(y(k_\nu)) = f(x^{\star})$.
\end{IEEEproof}

\begin{IEEEbiography}
	[{\includegraphics[width=1.1in,height=1.25in,clip,keepaspectratio]{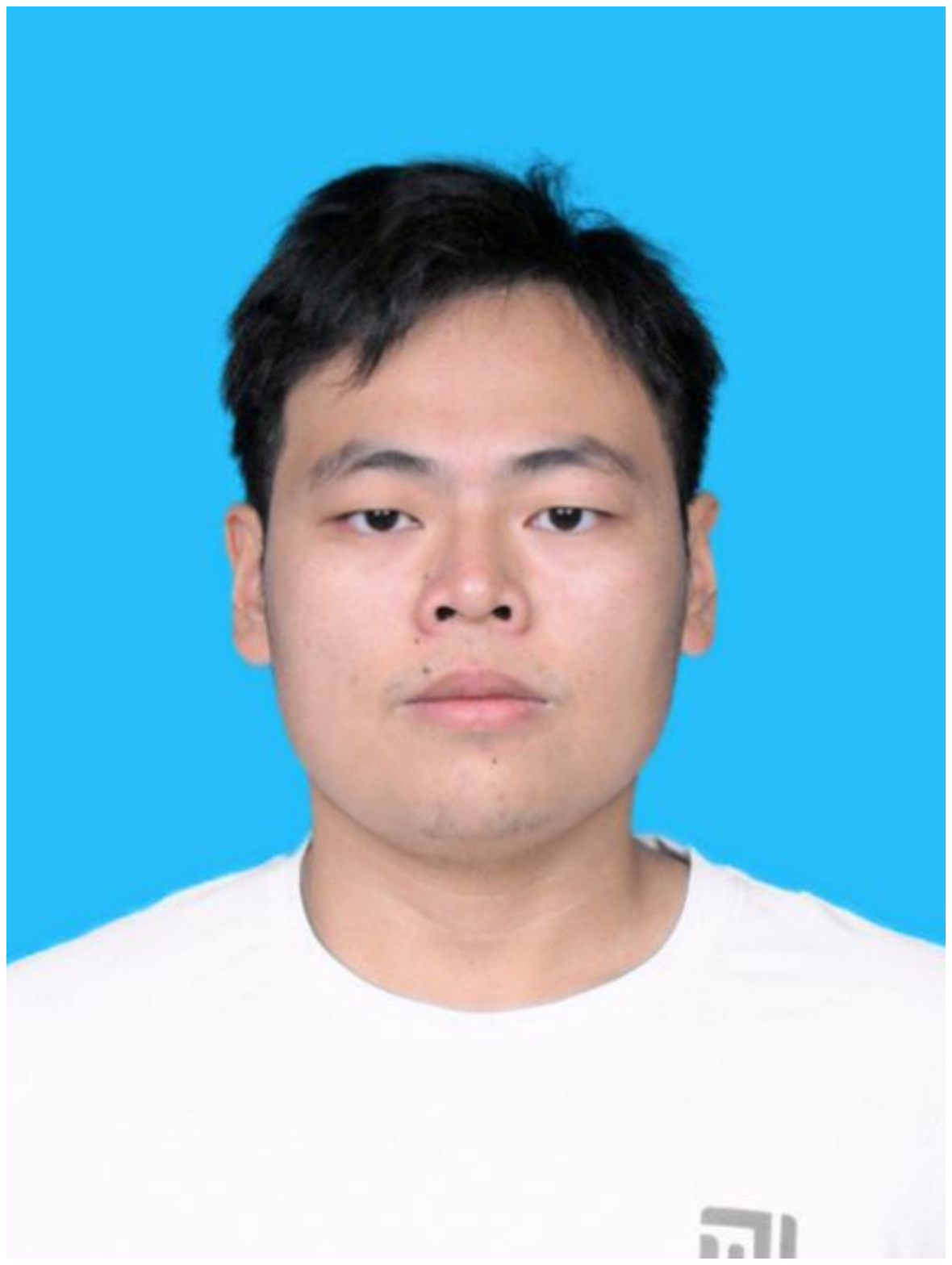}}]{Chen Ouyang}
	received the B.S. degree in information and computational science, the M.E. degrees in School of Mathematics and Information Science from Guangxi University, Nanning, China, in 2022, 2025, respectively. He is now pursuing his Ph.D. in the School of Intelligent Systems Engineering, Sun Yat-sen University. His current research interests include distributed optimization and machine learning.
\end{IEEEbiography}
\vspace{-8cm}
\begin{IEEEbiography}
	[{\includegraphics[width=1in,height=1.25in,clip,keepaspectratio]{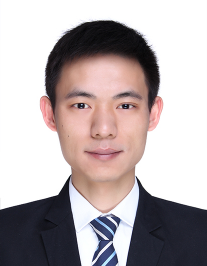}}]{Yongyang Xiong}
	received the B.S. degree in information and computational science, the M.E. and Ph.D. degrees in control science and engineering from Harbin Institute of Technology, Harbin, China, in 2012, 2014, and 2020, respectively. From 2017 to 2018, he was a Joint Ph.D.Student with the School of Electrical and Electronic Engineering, Nanyang Technological University, Singapore. From 2021 to 2024, he was a Postdoctoral Fellow with the Department of Automation, Tsinghua University, Beijing, China. Currently, he is an associate professor with the School of Intelligent Systems Engineering, Sun Yat-sen University. His current research interests include networked control systems, distributed optimization and learning, multi-agent reinforcement learning and their applications.
\end{IEEEbiography}
\vspace{-8cm}
\begin{IEEEbiography}
	[{\includegraphics[width=1in,height=1.25in,clip,keepaspectratio]{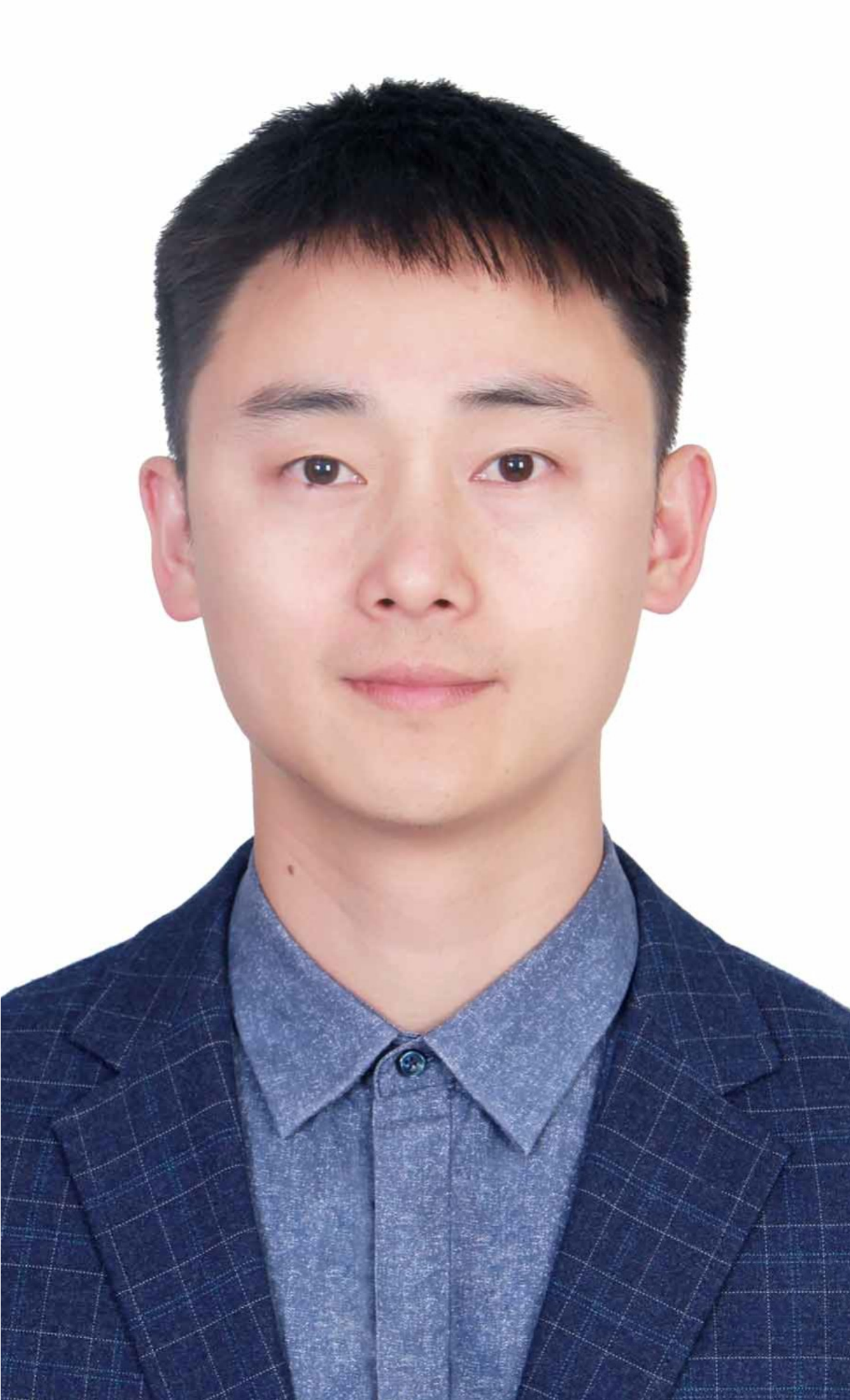}}]{Jinming Xu}
	received the B.S. degree in Mechanical Engineering from Shandong University,
	China, in 2009 and the Ph.D. degree in Electrical and Electronic Engineering from Nanyang
	Technological University (NTU), Singapore, in
	2016. From 2016 to 2017, he was a Research
	Fellow at the EXQUITUS center, NTU; he was
	also a postdoctoral researcher in the Ira A.
	Fulton Schools of Engineering, Arizona State
	University, from 2017 to 2018, and the School of
	Industrial Engineering, Purdue University, from
	2018 to 2019, respectively. In 2019, he joined Zhejiang University,
	China, where he is currently a Professor with the College of Control
	Science and Engineering. His research interests include distributed
	optimization and control, machine learning and network science. He
	has published over 50 peer-reviewed papers in prestigious journals
	and leading conferences. He has been the Associate Editor of IEEE
	TRANSACTIONS on SIGNAL and INFORMATION PROCESSING OVER NETWORKS.
\end{IEEEbiography}
\newpage
\begin{IEEEbiography}
	[{\includegraphics[width=1in,height=1.25in,clip,keepaspectratio]{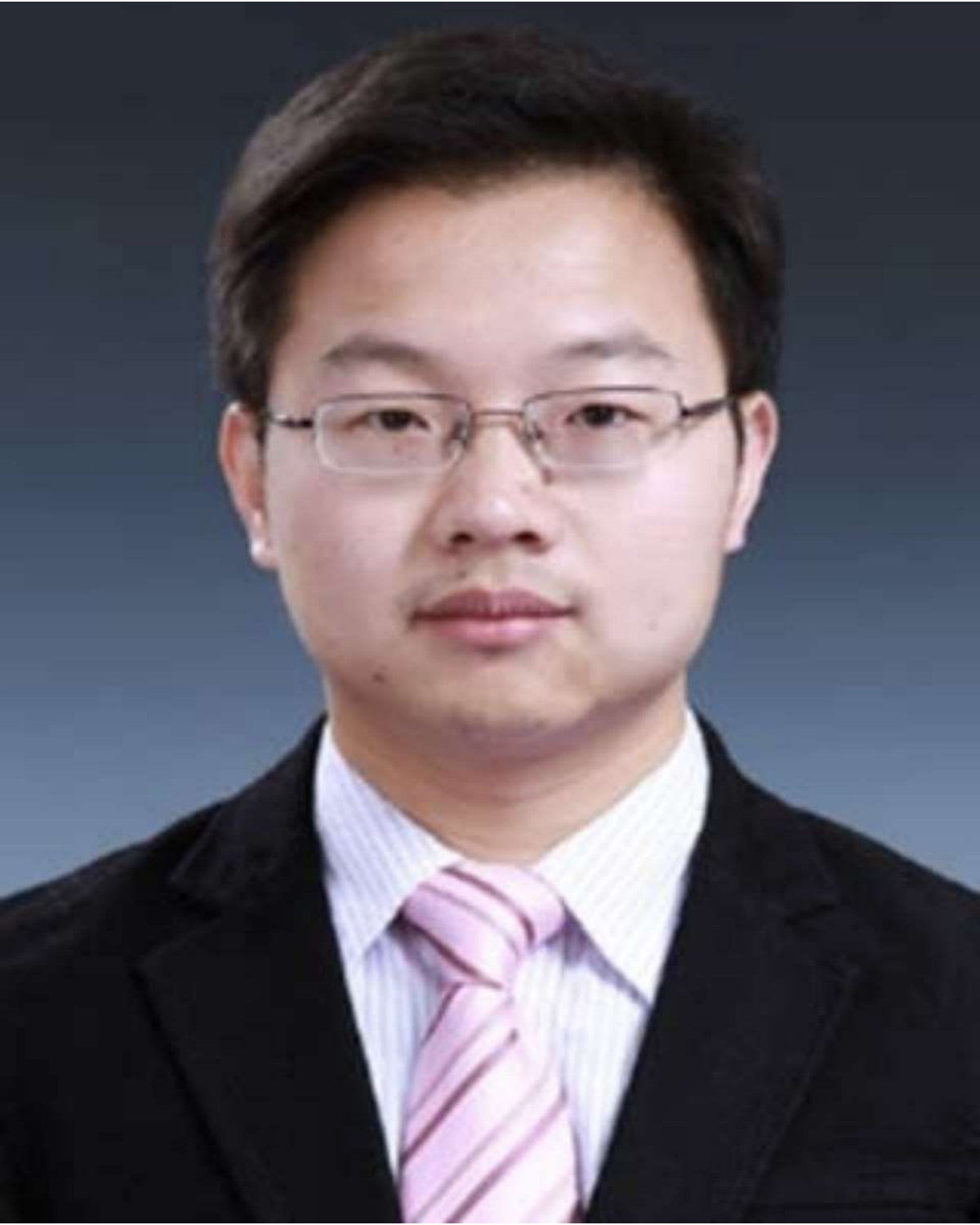}}]{Keyou You}
(Senior Member, IEEE) received the B.S. degree in statistical science from Sun Yat-sen University, Guangzhou, China, in 2007 and the Ph.D. degree in electrical and electronic engineering from Nanyang Technological University (NTU), Singapore, in 2012. 

After briefly working as a Research Fellow at NTU, he joined Tsinghua University, Beijing, China, where he is currently a Full Professor in the Department of Automation. He held visiting positions with Politecnico di Torino, Turin, Italy, Hong Kong University of Science and Technology, Hong Kong, China, University of Melbourne, Melbourne, Victoria, Australia, and so on. His research interests include the intersections between control, optimization and learning, as well as their applications in autonomous systems. 

Dr. You received the Guan Zhaozhi Award at the 29th Chinese Control Conference in 2010 and the ACA (Asian Control Association) Temasek Young Educator Award in 2019. He received the National Science Funds for Excellent Young Scholars in 2017 and for Distinguished Young Scholars in 2023. He is currently an Associate Editor for \textit{Automatica} and IEEE TRANSACTIONS ON CONTROL OF NETWORK SYSTEMS.
\end{IEEEbiography}
\vspace{-4cm}
\begin{IEEEbiography}
	[{\includegraphics[width=1in,height=1.25in,clip,keepaspectratio]{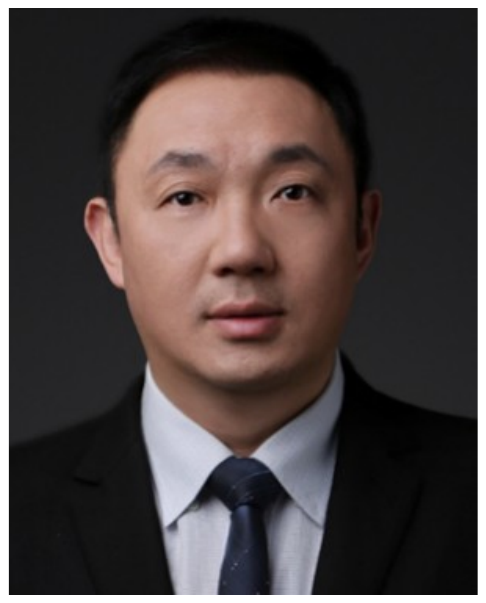}}]{Yang Shi} (Fellow, IEEE) 
received the B.Sc. and Ph.D. degrees in mechanical engineering and
automatic control from Northwestern Polytechnical University, Xi’an, China, in 1994 and 1998, respectively, and the Ph.D. degree in electrical and computer engineering from the University of Alberta, Edmonton, AB, Canada, in 2005. He was a Research Associate with the Department of Automation, Tsinghua University, China, from 1998 to 2000. From 2005 to 2009, he was an Assistant Professor and an Associate Professor with the Department of Mechanical Engineering, University of Saskatchewan, Saskatoon, SK, Canada. In 2009, he joined the University of Victoria, and currentlu he is a Professor with the Department of Mechanical Engineering, University of Victoria, Victoria, BC, Canada. His current research interests include networked and distributed systems, model predictive control (MPC), cyber-physical systems (CPS), robotics and mechatronics, navigation and control of autonomous systems (AUV and UAV), and energy system applications.

Dr. Shi is the IFAC Council Member. He is a fellow of ASME, CSME,
Engineering Institute of Canada (EIC), Canadian Academy of Engineering (CAE), Royal Society of Canada (RSC), and a registered Professional Engineer in British Columbia and Canada. He received the University of Saskatchewan Student Union Teaching Excellence Award in 2007, the Faculty of Engineering Teaching Excellence Award in 2012 at the University of Victoria (UVic), and the 2023 REACH Award for Excellence in Graduate Student Supervision and Mentorship. On research, he was a recipient of the JSPS Invitation Fellowship (short-term) in 2013, the UVic Craigdarroch Silver Medal for Excellence in Research in 2015, the Humboldt Research Fellowship for Experienced Researchers in 2018, CSME Mechatronics Medal in 2023, the IEEE Dr.-Ing. Eugene Mittelmann Achievement Award in 2023,
the 2024 IEEE Canada Outstanding Engineer Award. He was a Vice-President on Conference Activities of IEEE IES from 2022 to 2025 and the Chair of IEEE IES Technical Committee on Industrial Cyber-Physical Systems. Currently, he is the Editor-in-Chief of IEEE TRANSACTIONS ON INDUSTRIAL ELECTRONICS. He also serves as Associate Editor for \textit{Automatica}, IEEE TRANSACTIONS ON AUTOMATIC CONTROL, and \textit{Annual Review in Controls}.
\end{IEEEbiography}
% TODO: \usepackage{graphicx} required

\end{document}